\documentclass[12pt]{amsart}
\usepackage[utf8]{inputenc}
\usepackage[english]{babel}
\usepackage{amsmath, amssymb, amsthm, amscd,color,comment}
\usepackage{cancel}
\usepackage{cite}
\usepackage{alltt}
\usepackage[dvipsnames]{xcolor}
\usepackage{array}
\usepackage[small,bf,labelsep=period]{caption}
\usepackage{mathtools}
\usepackage{textcomp}
\usepackage{enumerate}
\newtheorem{theorem}{Theorem}[section]

\newtheorem{remark}[theorem]{Remark}
\newtheorem{lemma}[theorem]{Lemma}

\newtheorem{corollary}[theorem]{Corollary}
\newtheorem{proposition}[theorem]{Proposition}

\begin{document}
	\setlength{\parindent}{0cm}	
	\title[Ruelle's inequality and the Pesin formula]{Ruelle's inequality and the Pesin formula for geodesic flows on manifolds without conjugate points}
	\thanks{{\bf Keywords}: Geodesic flow, Jacobi field, conjugate points, Lyapunov exponents, entropy.}
	
	\thanks{{\bf Mathematics Subject Classification (2010)}: 37D25, 53C20. }
	
	\author{Alexander Cantoral}
	\address{School of Mathematics, Sun Yat-sen University, Zhuhai, China}
	\email{vilchez@mail.sysu.edu.cn}
	
	\author{Sergio Romaña}
	\address{School of Mathematics, Sun Yat-sen University, Zhuhai, China}
	\email{sergio@mail.sysu.edu.cn}
	\begin{abstract} In this paper, we prove the existence of Lyapunov exponents for the geodesic flow on complete Riemannian manifolds without conjugate points whose sectional curvature is bounded below. Under suitable additional curvature assumptions, we establish Ruelle's inequality. Furthermore, assuming the same geometric conditions, we obtain the Pesin entropy formula for $C^1$-Hölder geodesic flows on finite-volume manifolds without conjugate points.
	\end{abstract}
	
	\maketitle
	\section{Introduction}\noindent
	The classical Ruelle's inequality and Pesin entropy formula are well-established for diffeomorphisms on compact manifolds. In that setting, the existence of Lyapunov exponents is automatically guaranteed, as the norm of the differential remains bounded. The situation becomes considerably more delicate in the non-compact case, where the main difficulty lies in the possible failure of integrability conditions. Despite this obstacle, various extensions of these entropy formulas have been obtained. For instance, Riquelme in \cite{riquelme} proved both Ruelle's inequality and the Pesin entropy formula for geodesic flows on manifolds with pinched negative sectional curvature, under supplementary assumptions on the derivatives of the curvature. More recently, in \cite{cantoralpesin}, we established the same results for Anosov geodesic flows on non-compact manifolds, assuming suitable curvature bounds.\\
	
	In the present work, we go a step further by showing that the Anosov condition is not actually necessary to guarantee the existence of Lyapunov exponents for geodesic flows $\phi$ on manifolds without conjugate points. More precisely, we prove the following integrability result, which lies at the core of our approach:
	\begin{theorem}
		Let $M$ be a complete Riemannian manifold without conjugate points and sectional curvature bounded below by $-c^2$, where $c\ge 0$. Then, for every $\phi^t$-invariant probability measure $\mu$ on $SM$, we have that $\log^+ \left\| d\phi^{\pm 1}\right\|\in L^1(\mu)$.  
	\end{theorem}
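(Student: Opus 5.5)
The plan is to bound $\log^+\|d\phi^{\pm1}\|$ by the time-average of the norm of the curvature operator along the orbit segment, and then to show that this curvature norm is $\mu$-integrable. The second part uses the Riccati equation for the Green bundles, which exist because there are no conjugate points.

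\emph{Step 1 (Gronwall).} For $\theta=(p,v)\in SM$, identify $T_\theta SM$ with pairs $(J(0),J'(0))$ of a Jacobi field along the geodesic $\gamma_\theta$. Then $d\phi^t_\theta(J(0),J'(0))=(J(t),J'(t))$, where $J''+R(t)J=0$ and $R(t)=R(\cdot,\dot\gamma(t))\dot\gamma(t)$. Writing this as the first order system $X'=A(t)X$ with $\|A(t)\|\le 1+\|R(t)\|$, Gronwall's inequality gives
\[
\log^+\|d\phi^{\pm1}_\theta\|\le \int_0^{1}\bigl(1+\|R(\phi^{\pm s}\theta)\|\bigr)\,ds .
\]
By Fubini and the $\phi^t$-invariance of $\mu$, the right-hand side has $\mu$-integral $1+\int\|R\|\,d\mu$. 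So it suffices to prove that $\theta\mapsto\|R_\theta\|$ lies in $L^1(\mu)$, where $R_\theta=R(\cdot,v)v$ is viewed as a symmetric operator on $v^\perp$.

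\emph{Step 2 (reduction to Ricci).} The eigenvalues of $R_\theta$ are sectional curvatures, so they are all $\ge -c^2$. Their sum is $\mathrm{Ric}(v)$. Hence every eigenvalue is at most $\mathrm{Ric}(v)+(n-2)c^2$, and
\[
\|R_\theta\|\le \max\{c^2,\ \mathrm{Ric}(v)+(n-2)c^2\}\le \mathrm{Ric}(v)+(2n-3)c^2 .
\]
Since $\mathrm{Ric}\ge -(n-1)c^2$ is bounded below, it remains only to show $\int \mathrm{Ric}\,d\mu<\infty$. In fact we aim for $\int\mathrm{Ric}\,d\mu\le 0$.

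\emph{Step 3 (Green bundles and Riccati).} Because $M$ has no conjugate points, Green's construction yields the unstable Riccati tensor $U^u(\theta)$. It is the limit of $U_r$ as $r\to-\infty$, where $U_r$ is the Riccati solution coming from Jacobi tensors vanishing at time $r$. Along orbits it satisfies
\[
(U^u)'+(U^u)^2+R=0 .
\]
The curvature bound $K\ge -c^2$ and Riccati comparison with the constant curvature $-c^2$ model give $U^u\le c\coth(c(t-r))\to c$. This is Eberlein's bound, and combined with the analogous bound for $U^s$ it gives $\|U^u\|\le c$ uniformly on $SM$. Taking traces, with $u=\operatorname{tr}U^u$, we get
\[
u'+\operatorname{tr}(U^u)^2+\mathrm{Ric}=0 .
\]
$U^u$ is measurable as a pointwise limit of continuous objects, and $u$ is bounded. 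Integrating $u(\phi^1\theta)-u(\theta)=\int_0^1 u'(\phi^s\theta)\,ds$ against the invariant probability $\mu$ gives $\int u'\,d\mu=0$. Therefore
\[
\int\mathrm{Ric}\,d\mu=-\int\operatorname{tr}(U^u)^2\,d\mu\le 0 .
\]
Steps 1 and 2 then give $\log^+\|d\phi^{\pm1}\|\in L^1(\mu)$.

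\emph{Main obstacle.} I expect the difficulty to lie in Step 3. It requires the uniform bound $\|U^u\|\le c$ without any upper curvature bound. It also requires justifying the differentiation along orbits and the vanishing of $\int u'\,d\mu$ for an arbitrary, possibly non-ergodic, invariant measure on a non-compact $SM$. The Gronwall and linear-algebra steps are routine. One can avoid differentiability issues by working with the integrated identity along orbit segments of length $1$, which only uses that $u$ is bounded and measurable.
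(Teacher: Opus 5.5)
Your proof is correct up to two small repairs noted at the end, but it takes a genuinely different route from the paper.

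\textbf{The paper's route.} The paper never integrates curvature; it bounds each Jacobi field directly. It discards the tangential part, which has constant norm because $J'_\xi(0)\perp v$. It writes the normal part as the stable field $S(t)J_\xi^\perp(0)$ plus a field vanishing at $t=0$. The first is controlled by $\|\dot S S^{-1}\|\le c$, the second by Rauch comparison and $\|J'(t)\|\le\|J'(0)\|\cosh(ct)$. This yields the pointwise estimate $\sup_{SM}\|d\phi^{\pm1}\|\le\sqrt{C_1(c)^2+C_2(c)^2}$ with dimension-free constants, so integrability is trivial. The same uniform bound is reused in Section~4 (the constant $h(c)$ in Proposition 4.3, Corollary 4.4 and Lemma 4.7), which an $L^1(\mu)$ statement alone would not supply.

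\textbf{Your route.} You use Gronwall, a reduction to Ricci, and the traced Riccati equation for $U^u$ (the Hopf--Green total-curvature argument). This rests on the same geometric input, Eberlein's bound $-c\le U^s\le U^u\le c$. As a bonus it gives $\int\mathrm{Ric}\,d\mu\le 0$ for every invariant probability measure.

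It is also stronger than you use it. Since $|u|\le (n-1)c$, the integrated identity gives, for every $\theta$,
\[
\int_0^1\mathrm{Ric}(\phi^s\theta)\,ds=u(\theta)-u(\phi^1\theta)-\int_0^1\operatorname{tr}(U^u)^2(\phi^s\theta)\,ds\le 2(n-1)c .
\]
So Steps 1 and 2 already yield the uniform bound $\|d\phi^{\pm1}_\theta\|\le\exp\bigl(1+2(n-1)c+nc^2\bigr)$ with no reference to $\mu$, though with a worse, dimension-dependent constant than the paper's. This also disposes of your ``main obstacle'': $-u'=\operatorname{tr}(U^u)^2+\mathrm{Ric}\ge-(n-1)c^2$, so exchanging the $ds$ and $d\mu$ integrals is justified by Tonelli.

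\textbf{Repairs.} In Step 2, the inequality $\max\{c^2,\mathrm{Ric}(v)+(n-2)c^2\}\le\mathrm{Ric}(v)+(2n-3)c^2$ fails for $n=2$ when $K=-c^2$. Use $\mathrm{Ric}(v)+nc^2$ instead, which follows from $\mathrm{Ric}(v)\ge-(n-1)c^2$. For $c=0$, replace $c\coth(c(t-r))$ by $1/(t-r)$.
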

	The integrability condition provided by Theorem 1.1 ensures that Oseledets theorem applies. Hence, for $\mu$-almost every $\theta \in SM$, the Lyapunov exponents $\{\lambda_i(\theta)\}_{i=1}^{l(\theta)}$ are well-defined and the tangent space admits an Oseledets splitting $$T_\theta SM = \bigoplus_{i=1}^{l(\theta)} H_i(\theta),$$ where each subspace $H_i(\theta)$ corresponds to the Lyapunov exponent $\lambda_i(\theta)$ with multiplicity $\dim(H_i(\theta))$. Once the existence of Lyapunov exponents has been established for $\mu$-almost every point, we proceed to prove Ruelle's inequality for the geodesic flow on the unit tangent bundle of a non-compact manifold without conjugate points, under suitable curvature assumptions:
	\begin{theorem}
		Let $M$ be a complete Riemannian manifold without conjugate points. Assume that the curvature tensor and the derivative of the curvature tensor are both uniformly bounded. Then, for every $\phi^t$-invariant probability measure $\mu$ on $SM$ we have
		\begin{align*}
			h_\mu(\phi)\le  \int\limits_{SM} \sum_{\lambda_i(\theta)>0}\lambda_i(\theta)\cdot\dim (H_i(\theta))d\mu(\theta).
		\end{align*}   
	\end{theorem}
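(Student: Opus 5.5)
We plan to prove Theorem 1.2 by reducing it to a Ruelle inequality for the time-one map $f=\phi^1$ on the non-compact manifold $SM$, in the spirit of the non-compact versions of Ruelle's argument used by Riquelme \cite{riquelme} and in \cite{cantoralpesin}. Since $h_\mu(\phi)=h_\mu(\phi^1)$ and the Lyapunov exponents of the flow coincide with those of $\phi^1$, it suffices to show
\[
h_\mu(\phi^1)\le \int_{SM}\sum_{\lambda_i(\theta)>0}\lambda_i(\theta)\dim(H_i(\theta))\,d\mu(\theta).
\]
Theorem 1.1 applies here: a uniform bound on the curvature tensor gives a lower bound $-c^2$ on sectional curvature. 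Hence $\log^+\|d\phi^{\pm1}\|\in L^1(\mu)$, and Oseledets' theorem gives the exponents and the splitting.

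The key steps are the following.

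First, we establish uniform geometric control on $SM$ with the Sasaki metric. Because the curvature tensor $R$ is bounded, the Jacobi equation $J''+R(J,\gamma')\gamma'=0$ yields $\|d\phi^t\|\le e^{C|t|}$ uniformly on $SM$.

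Second, we obtain a uniform second-order bound. Because $\nabla R$ is also bounded, the second derivative of $\phi^1$, read in normal coordinates (exponential charts) on balls of a fixed radius $r_0$, is uniformly bounded. This step needs a uniform lower bound on the injectivity radius of $SM$ at scale $r_0$, or else we work in the charts $\exp_\theta$ restricted to a fixed radius, where bounded geometry suffices. Concretely, we aim for the following: there exist $r_0,C>0$ such that for every $\theta$, in the chart $\widetilde f_\theta=\exp_{f\theta}^{-1}\circ f\circ\exp_\theta$ on $B(0,r_0)$ one has
\[
\|d\widetilde f_\theta(v)-d\widetilde f_\theta(0)\|\le C|v|.
\]

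Third, we run Ruelle's covering argument with a partition adapted to a non-compact space. Fix a compact set $K\subset SM$ with $\mu(K)$ close to $1$, and build a countable partition $\mathcal P$ of finite entropy. It consists of small cubes of diameter $\varepsilon$ on $K$, together with pieces far out whose sizes are controlled so that $H_\mu(\mathcal P)<\infty$ and $h_\mu(f,\mathcal P)$ approximates $h_\mu(f)$; here we use a Mañé-type partition with $\log$ of the size function integrable. By the second step, $f(P)$ for $P\in\mathcal P$ is close to the ellipsoid $d_\theta f(P)$. The number $N(P)$ of partition elements meeting $f(P)$ is then bounded by $C\max_k\prod_{j\le k}\sigma_j(d_\theta f)$, plus a correction depending on the local scale at $f(P)$. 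Integrating gives
\[
h_\mu(f)\le \int\log^+\prod_{\sigma_j>1}\sigma_j(d_\theta f)\,d\mu+\text{error}.
\]
Applying this to $f^n$ and dividing by $n$, the subadditive ergodic theorem (fed by the integrability from Theorem 1.1) converts the singular values into the sum of the positive exponents with multiplicity.

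The main obstacle is controlling the error term uniformly in $n$ in the non-compact setting. Partition elements near infinity have shrinking size, and the count of neighbors of $f^n(P)$ must not blow up. I would first try Mañé's approach: choose the local scale $\rho(\theta)$ with $\log\rho\in L^1(\mu)$, and use the uniform $C^2$ bounds from the second step to show that the correction is bounded by $\int|\log\rho\circ f-\log\rho|\,d\mu$. That quantity is finite, and after dividing by $n$ it tends to $0$.
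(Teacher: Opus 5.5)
Your treatment of the compact part takes a different route from the paper, and that route is sound. The paper uses Egorov's theorem, the norm/conorm constant $\kappa$, Theorem 4.5 and the regular sets $\Lambda_m$. You instead linearize $\phi^n$ uniformly at a scale $\delta_n\approx e^{-Cn}$, with your Step 2 playing the role of Theorem 4.5. You then apply Ruelle's singular-value count and let Kingman's theorem turn $\frac1n\int\log^+\prod_{\sigma_j>1}\sigma_j(d_\theta\phi^n)\,d\mu$ into $\int\sum_{\lambda_i>0}\lambda_i\dim H_i\,d\mu$. This removes the need for Egorov on the compact part.

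\textbf{The gap.} It lies at the step you call the main obstacle, and the Mañé-type remedy fails on a non-compact $SM$, for three reasons.

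First, Mañé's lemma (a partition with $H_\mu(\mathcal P)<\infty$ and $\operatorname{diam}\mathcal P(\theta)\le\rho(\theta)$ whenever $\log\rho\in L^1(\mu)$) relies on compactness, through the $O(r^{-\dim SM})$ bound on the number of $r$-balls needed to cover the space. Without compactness, small atoms can force infinite entropy whatever $\rho$ is. Take a $\mathbb Z$-cover of a compact hyperbolic surface, which has bounded geometry and no conjugate points. Let $\gamma_k$ be deck translates of one closed geodesic, pairwise at distance at least $1$, and set $\mu=\sum_k p_k\mu_{\gamma_k}$ with $\sum_k p_k\log(1/p_k)=\infty$. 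Every partition into atoms of diameter less than $1$ refines $\{\gamma_k\}$ modulo $\mu$, so it has infinite entropy. Ergodic measures with the same heavy-tailed spreading also exist, so passing to ergodic components does not help.

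Second, Ruelle's count is a volume count. It needs a \emph{lower} bound on the size of the atoms meeting $\phi^n(P)$, whereas Mañé's lemma only bounds diameters from above. So the correction $\int|\log\rho\circ\phi^n-\log\rho|\,d\mu$ does not follow even where such a partition exists.

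Third, if you make the far-out atoms large to keep the entropy finite, Step 2 says nothing about their images, and the singular-value count does not apply to them.

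\textbf{The missing idea (the paper's device).} Let $K$ be compact with $\mu(SM\setminus K)<\varepsilon$. Partition only $K$, into at most $\zeta\,\delta_n^{-\dim SM}$ round cells at the linearization scale, and put all of $SM\setminus K$ into a \emph{single} atom. Cells in $K$ are handled by your singular-value count; their images may additionally meet the one big atom. For $D=\phi^n(SM\setminus K)$, use the crude bound $\operatorname{card}\{X\in\mathcal P:X\cap D\neq\emptyset\}\le|\mathcal P_K|+1$. Since $\delta_n$ decays only exponentially, $\log|\mathcal P_K|=O(n)$. This atom therefore contributes $\mu(SM\setminus K)\cdot O(n)=O(\varepsilon n)$, which is the last term in the paper's final estimate. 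It is not $o(n)$; it disappears only after dividing by $n$ and then letting $\varepsilon\to0$.

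\textbf{A second point you must check.} These $n$-dependent finite partitions have to capture $h_\mu(\phi^n)$ up to $O(\varepsilon n)$. The paper builds this into its construction of $\mathcal P$. It is not automatic, because refining $\mathcal P_K$ increases $\log|\mathcal P_K|$. One way to arrange it:
\begin{enumerate}
\item Fix a finite partition $\mathcal Q$ with $\mu(\partial\mathcal Q)=0$ and $h_\mu(\phi,\mathcal Q)\ge h_\mu(\phi)-\varepsilon$.
\item Only afterwards choose $K$, with $\mu(SM\setminus K)\log|\mathcal Q|<\varepsilon$.
\item Use $n\,h_\mu(\phi,\mathcal Q)\le h_\mu(\phi^n,\mathcal P)+H_\mu\bigl(\bigvee_{i<n}\phi^{-i}\mathcal Q\,\big|\,\mathcal P\bigr)$.
\end{enumerate}
The conditional entropy is at most $\varepsilon n+o(n)$. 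The $\varepsilon n$ comes from the big atom. The $o(n)$ comes from the cells in $K$: if $\delta_n\le e^{-Cn}$ with $e^{C}\ge\sup\|d\phi\|$, then $\phi^i(P)$ has diameter at most $e^{-C(n-i)}$ for $i<n$. So it meets several atoms of $\mathcal Q$ only when it lies close to $\partial\mathcal Q$, and a Ces\`aro argument together with $\mu(\partial\mathcal Q)=0$ makes these contributions $o(n)$.
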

	Although Theorem 1.1 guarantees the existence of Lyapunov exponents in our setting, the resulting Oseledets decomposition may differ significantly from that arising in the Anosov case considered in \cite{cantoralpesin}. Indeed, for Anosov geodesic flows, at almost every point, the stable and unstable bundles coincide with the direct sums of the Oseledets subspaces associated with the negative and positive Lyapunov exponents, respectively, while the central subspace is one-dimensional and generated by the flow direction. In contrast, for geodesic flows on manifolds without conjugate points and sectional curvature bounded below, additional zero Lyapunov exponents may occur in directions transverse to the flow, so the central Oseledets subspace may have dimension greater than one. Consequently, the uniform hyperbolicity estimates available in the Anosov setting are no longer valid, creating a significant additional difficulty. To overcome this lack of uniformity, we combine the general strategy of \cite{cantoralpesin} with an application of Egorov's theorem. More precisely, Egorov's theorem yields uniform exponential estimates for both the norm and the conorm of the derivative of the flow on compact sets of arbitrarily large measure. These estimates recover the uniform control required in the proof of \cite{cantoralpesin}, allowing us to extend their arguments to the present setting.\\
	
	A key observation is that, in the proof of Theorem 1.2 of \cite{cantoralpesin}, the Anosov assumption is used only to guarantee the existence of Lyapunov exponents, whereas all the essential estimates rely solely on the curvature assumptions. Consequently, combining Theorems 1.1 and 1.2 yields the following extension of the Pesin entropy formula to geodesic flows on finite-volume manifolds without conjugate points:
	\begin{theorem}
		Let $M$ be a complete Riemannian manifold of finite volume, without conjugate points and suppose its geodesic flow is $C^1$-Hölder. Assume that the curvature tensor and the derivative of the curvature tensor are both uniformly bounded. Then, for every $\phi^t$-invariant probability measure $\mu$ on $SM$ which is absolutely continuous relative to the Lebesgue measure, we have
		\begin{align*}
			h_\mu(\phi)= \int\limits_{SM} \sum_{\lambda_i(\theta)>0}\lambda_i(\theta)\cdot\dim (H_i(\theta))d\mu(\theta).
		\end{align*}   
	\end{theorem}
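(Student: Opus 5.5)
The proof combines Ruelle's inequality (Theorem 1.2) with the reverse inequality, which I would obtain by adapting the proof of the Pesin formula in \cite{cantoralpesin}. The upper bound
\[
h_\mu(\phi)\le \int_{SM}\sum_{\lambda_i(\theta)>0}\lambda_i(\theta)\dim(H_i(\theta))\,d\mu(\theta)
\]
is Theorem 1.2. Its hypotheses (no conjugate points, bounded curvature tensor and bounded derivative) are part of the present statement, and it holds for every invariant probability measure, in particular for absolutely continuous ones. So only the lower bound
\[
h_\mu(\phi)\ge \int\sum_{\lambda_i>0}\lambda_i\dim H_i\,d\mu
\]
needs proof. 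Throughout, the existence of the exponents and of the Oseledets splitting comes from Theorem 1.1. This applies because the lower curvature bound $-c^2$ follows from the uniform bound on the curvature tensor.

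For the lower bound I follow the scheme of \cite{cantoralpesin}, itself a noncompact adaptation of Mañé's and Ledrappier--Young's arguments. The steps are as follows.

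\textbf{(i) Time-one map.} Work with $f=\phi^1$, so that $h_\mu(\phi)=h_\mu(f)$ and the exponents of $f$ are those of the flow.

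\textbf{(ii) Uniform local charts.} The bounds on $R$ and $\nabla R$ give, via the Jacobi equation, uniform control of $d\phi^1$ and of its variation on balls of a fixed radius in the Sasaki metric. Hence there are uniform-size exponential charts in which $f$ is $C^1$ close to its derivative. This replaces compactness.

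\textbf{(iii) Unstable manifolds and absolute continuity.} Use the $C^1$-Hölder hypothesis to construct local unstable manifolds tangent to $E^u=\bigoplus_{\lambda_i>0}H_i$ via Pesin theory on regular sets. Then prove absolute continuity of the unstable lamination with respect to Lebesgue measure.

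\textbf{(iv) Entropy via volume.} For $\mu\ll\mathrm{Leb}$, build a measurable partition $\xi$ subordinate to the unstable lamination with $h_\mu(f,\xi)$ computed by the conditional densities. This yields $h_\mu(f)\ge\int\log|\det df|_{E^u}|\,d\mu=\int\sum_{\lambda_i>0}\lambda_i\dim H_i\,d\mu$, using $\log|\det df|_{E^u}|\in L^1(\mu)$ (from Theorem 1.1 and the bound on $\|d\phi^{\pm1}\|$).

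As noted in the introduction, in \cite{cantoralpesin} the Anosov assumption served only to guarantee that the exponents exist, while all estimates depended on the curvature bounds. So I would check that each estimate in (ii)--(iv) uses only $\|R\|,\|\nabla R\|$ bounded and the integrability from Theorem 1.1.

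The main obstacle is step (iii), because the splitting is now nonuniform. Extra zero exponents may appear transverse to the flow, so $E^u$ need not be the Green unstable bundle and there is no uniform contraction or expansion. Finite volume guarantees that Lebesgue measure normalises to a probability measure, so it, and any absolutely continuous invariant $\mu$, falls under Theorems 1.1 and 1.2. It does not, however, supply the required uniformity. My first attempt would be the Egorov approach outlined in the introduction:
\begin{enumerate}
\item By Egorov's theorem, for each $\varepsilon>0$ choose a compact set $K_\varepsilon$ with $\mu(K_\varepsilon)>1-\varepsilon$ on which the convergence $\frac1n\log\|df^n|_{H_i}\|\to\lambda_i$ and the analogous conorm limits are uniform, and on which the angles between the Oseledets subspaces are bounded below.
\item Run the Hadamard--Perron graph transform and the absolute continuity estimates only along orbit segments returning to $K_\varepsilon$, with the uniform charts from (ii).
\item Use Poincaré recurrence and Kac's lemma to pass from $K_\varepsilon$ to $\mu$-almost every point, then let $\varepsilon\to0$ to obtain the full integral.
\end{enumerate}
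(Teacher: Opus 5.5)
Your architecture matches the paper's: the upper bound is Theorem 1.2, and the lower bound comes from rerunning the argument of \cite{cantoralpesin} on a compact set supplied by Egorov's theorem. What you are missing is the reduction the paper identifies as the new step. Here $\mu$ is only absolutely continuous, not ergodic, so the spectrum varies with $\theta$, and in particular so does $\dim E^u(\theta)$. You cannot reduce to ergodic measures as in the proof of Theorem 1.2, because ergodic components of an absolutely continuous measure are in general singular, and absolute continuity is exactly what the lower bound uses. Your Egorov step is written with fixed $\lambda_i$ and $H_i$, uniform angle bounds, and a single bundle $E^u$ over which to run graph transforms, so it tacitly assumes a constant spectrum. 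The paper fixes this with the $\phi$-invariant sets $\Sigma_j=\{\theta:\dim E^u(\theta)=j\}$ and the measures $\mu_j=\mu(\cdot\cap\Sigma_j)/\mu(\Sigma_j)$, which are invariant and still absolutely continuous. Affinity of entropy gives $h_\mu(\phi)=\sum_j\mu(\Sigma_j)h_{\mu_j}(\phi)$, and the case $j=0$ is trivial. For $j>0$, Egorov yields a compact $K$ on which $E^{cs}\oplus E^u$ is continuous with fixed dimensions. It also yields $N$ and $\alpha>\beta>1$ such that $g=\phi^N$ satisfies $\|dg^n_\theta\eta\|\ge\alpha^n\|\eta\|$ for $\eta\in E^u(\theta)$, $\|dg^n_\theta|_{E^{cs}(\theta)}\|\le\beta^n$, and $\log|\det dg^n_\theta|_{E^u(\theta)}|\ge Nn(\mathcal{X}^+(\theta)-\varepsilon)$. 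A uniform $\alpha>1$ also requires choosing $K$ where the smallest positive exponent exceeds some fixed $\delta>0$, which costs only small measure.

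These three estimates, the curvature bounds and Proposition 2.4 are the only inputs the paper feeds into \cite{cantoralpesin}, whose lower bound is of Ma\~n\'e type. That argument never constructs unstable manifolds, never proves absolute continuity of an unstable lamination, and uses no partition subordinate to $W^u$. Absolute continuity of $\mu$ enters through Lebesgue-volume estimates on dynamically defined sets, not through conditional measures on unstable leaves. Your items 1--3 are close to this. Your steps (iii)--(iv), however, follow the Ledrappier--Strelcyn/Ledrappier--Young route. That route is legitimate in principle, but on a noncompact $SM$ it needs tempered Pesin charts, a proof of absolute continuity, and a subordinate partition with the right integrability. Your proposal addresses none of these, and the paper needs none of them. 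So you place the main obstacle at (iii), whereas in the paper Egorov disposes of the nonuniformity in one line, and the varying unstable dimension is the point that actually requires an idea.
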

	It is worth emphasizing that our results do not require the geodesic flow to be Anosov. This represents a substantial extension of the classical setting. Indeed, although every Anosov geodesic flow is known to have no conjugate points (see \cite{klin},\cite{mane2},\cite{nocon}), the converse fails in general. There exist many manifolds without conjugate points whose geodesic flow is not Anosov, including flat manifolds and numerous nonpositively curved manifolds. Consequently, our results apply to a considerably broader class of manifolds than those covered by the Anosov framework.

	\subsection*{Structure of the Paper:} 
	In Section 2, we introduce the notation and geometric tools used throughout the paper. In Section 3, we establish the existence of the Oseledets decomposition for the time-one map of the geodesic flow. In Section 4, we prove Ruelle's inequality for geodesic flows on manifolds without conjugate points. Finally, in Section 5, we develop the argument that enables us to derive the Pesin entropy formula by combining our results with those of \cite{cantoralpesin}.
	
\section{Preliminaries and notation}
\noindent
Throughout this paper, $M=(M,g)$ will denote a complete Riemannian manifold of dimension $n\ge 2$ without boundary, $TM$ is the tangent bundle, $SM$ its unit tangent bundle and $\pi:TM\rightarrow M$ will denote the canonical projection, that is, $\pi(x,v)=x$ for $(x,v)\in TM$.
\subsection{Geodesic flow}
Given $\theta=(x,v)\in TM$, we denote by $\gamma_\theta$ the unique geodesic with initial conditions $\gamma_\theta(0)=x$ and $\gamma'_\theta(0)=v$. The geodesic flow is defined as a family of $C^\infty$-diffeomorphisms $\phi^t:TM\rightarrow TM$, where $t\in \mathbb{R}$, and is given by
\begin{align*}
	\phi^t(\theta)=(\gamma_\theta(t), \gamma'_\theta(t)).
\end{align*}
Since geodesics travel with constant speed, the flow $\phi^t$ leaves $SM$ invariant. The geodesic flow generates a vector field $G$ on $TM$ given by
\begin{align*}
	G(\theta)=\left. \dfrac{d}{dt}\right|_{t=0} \phi^t(\theta)=\left. \dfrac{d}{dt}\right|_{t=0}\left( \gamma_\theta(t),\gamma'_\theta(t)\right) . 
\end{align*}
For each $\theta=(x,v)\in TM$, let $V$ be the vertical subbundle of $TM$ whose fiber at $\theta$ is given by $V_\theta=\ker d\pi_\theta$. Let $K: TTM\rightarrow TM$ be the connection map induced by the Riemannian metric (see \cite{paternain}) and denotes by $H$ the horizontal subbundle of $TM$ whose fiber at $\theta$ is given by $H_\theta=\ker K_\theta$. The maps $\left. d \pi_\theta\right|_{H_\theta}:H_\theta\rightarrow T_xM$ and $\left. K_\theta\right|_{V_\theta}: V_\theta\rightarrow T_xM$ are linear isomorphisms. This implies that $T_\theta TM=H_\theta\oplus V_\theta$ and the map $j_\theta:T_\theta TM\rightarrow T_xM\times T_xM$ given by
\begin{align*}
	j_\theta(\xi)=(d\pi_\theta(\xi), K_\theta(\xi))
\end{align*}
is a linear isomorphism. Furthermore, we can identify every element $\xi\in T_\theta TM$ with the pair $j_\theta(\xi)$. Using the decomposition $T_\theta TM=H_\theta \oplus V_\theta$, we endow the tangent bundle $TM$ with a special Riemannian metric that makes $H_\theta$ and $V_\theta$ orthogonal. This metric is called the Sasaki metric and it is given by
\begin{align*}
	\left\langle \xi, \eta \right\rangle_\theta=\left\langle d\pi_\theta(\xi), d\pi_\theta(\eta)\right\rangle_x + \left\langle K_\theta(\xi),K_\theta(\eta)\right\rangle_x . 
\end{align*}
From now on, we shall work with the Sasaki metric restricted to the unit tangent bundle $SM$, and with the geodesic flow restricted to $SM$. For $\theta = (x,v) \in SM$, it follows from the definition that $\xi \in T_\theta SM$ if and only if $\langle K_\theta(\xi), v \rangle = 0$ (see \cite{paternain}). Moreover, $SM$ is complete with respect to the Sasaki metric (see \cite[Lemma 2.1]{cantoralpesin}).	
	
\subsection{Jacobi fields} To study the differential of the geodesic flow using geometric arguments, let us recall the definition of a Jacobi field. A vector field $J$ along a geodesic $\gamma$ in $M$ is called a Jacobi field if it satisfies the Jacobi equation
\begin{align}\label{jacobi}
	J''(t)+R(\gamma'(t),J(t))\gamma'(t)=0,
\end{align}
where $R$ denotes the curvature tensor of $M$ and $"'"$ denotes the covariant derivative along $\gamma$. A Jacobi field is determined by the initial values $J(t_0)$ and $J'(t_0)$, for any given $t_0\in \mathbb{R}$. For every $\theta\in TM$, the map 
\begin{align}\label{iso}
	i_\theta: \xi\rightarrow J_\xi
\end{align}
defines an isomorphism between $T_\theta TM$ and the space of Jacobi fields along $\gamma_\theta$, where $J_\xi(0)=d\pi_\theta(\xi)$ and $J'_\xi(0)=K_\theta(\xi)$. If we denote by $S$ the orthogonal complement of the subspace spanned by $G$, then for every $\theta\in SM$, the map \eqref{iso} defines an isomorphism between $S(\theta)$ and the space of normal Jacobi fields along $\gamma_\theta$. The differential of the geodesic flow is determined by the behavior of the Jacobi fields and, consequently, by the curvature. More precisely, for $\theta\in SM$ and $\xi\in T_\theta SM$ we have (in the horizontal and vertical coordinates)
\begin{align*}
	d\phi^t_\theta(\xi)=(J_\xi(t), J'_\xi(t)), \hspace{0.5cm} t\in \mathbb{R}.
\end{align*}	
	
\subsection{No conjugate points}
Let $\gamma$ be a geodesic joining $p,q\in M$, where $p\neq q$. We say that $p$ and $q$ are conjugate along $\gamma$ if there exists a non-zero Jacobi field along $\gamma$ vanishing at $p$ and $q$. A manifold $M$ has no conjugate points if any pair of points are not conjugate along any geodesic. This is equivalent to the fact that the exponential map is non-singular at every point of $M$.\\

Suppose that $M$ is a complete Riemannian manifold without conjugate points and curvature bounded below by $-c^2$, for some $c>0$. Following the notations of \cite{Knieper}, let $S(t)$ and $U(t)$ denote the stable and unstable Jacobi tensors, respectively. These tensors are well defined and satisfy the following properties:
\begin{itemize}
	\item $S(0) = \operatorname{id}$, and for every $x$, the vector field $J(t)=S(t)x$ is a Jacobi field that remains bounded as $t\to +\infty$.
	\item $U(0) = \operatorname{id}$, and for every $x$, the vector field $J(t)=U(t)x$ is a Jacobi field that remains bounded as $t\to -\infty$.
\end{itemize}
The stable and unstable Jacobi tensors provide a canonical decomposition of the space of Jacobi fields and constitute one of the fundamental tools for studying the asymptotic behavior of geodesics. In particular, they are closely related to the stable and unstable Green bundles and play a central role in the analysis of Lyapunov exponents. The following two results, taken from Knieper's chapter (see \cite{Knieper}), will be essential for establishing the existence of Lyapunov exponents.
\begin{proposition}[Rauch comparison]
	Let $J$ be a normal Jacobi field along a unit-speed geodesic satisfying $J(0)=0$. Assume that the sectional curvature of M is bounded below by $-c^2$, for some $c>0$, and that the geodesic has no conjugate points on $[0,1]$. Then, for every $t\in[0,1]$,
	\begin{equation*}
		\left\|{J(t)}\right\| \le \left\| J'(0)\right\| \frac{\sinh(ct)}{c}.
	\end{equation*}
	Moreover, its covariant derivative satisfies the standard estimate (see \cite[Corollary 2.12]{Knieper}):
	\begin{align*}
		\left\|J'(t)\right\| \le\left\|J'(0)\right\| \cosh(ct).
	\end{align*}
\end{proposition}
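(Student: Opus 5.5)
The two estimates would be proved separately: the first by the classical index-form comparison argument of Rauch, the second by a Riccati-equation comparison that uses the first estimate.

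\textbf{Estimate on $\|J(t)\|$.} Set $f(t)=\|J(t)\|$ and $s_c(t)=\sinh(ct)/c$, so that $s_c''=c^2 s_c$, $s_c(0)=0$ and $s_c'(0)=1$. Fix $t_0\in(0,1]$ with $J(t_0)\neq 0$.
\begin{itemize}
\item Since $J(0)=0$, we have $f(t_0)f'(t_0)=\langle J'(t_0),J(t_0)\rangle=I_{t_0}(J,J)$, where $I_{t_0}$ is the index form on $[0,t_0]$.
\item Let $E(t)$ be the parallel field along the geodesic with $E(t_0)=J(t_0)$. It is normal, because $J$ is normal and $\gamma'$ is parallel. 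Put $W(t)=\frac{s_c(t)}{s_c(t_0)}E(t)$, so $W(0)=0$ and $W(t_0)=J(t_0)$.
\item Because the geodesic has no conjugate points on $[0,t_0]$, the index lemma gives $I_{t_0}(J,J)\le I_{t_0}(W,W)$.
\item The curvature bound $K\ge -c^2$ then gives
\[
I_{t_0}(W,W)=\frac{f(t_0)^2}{s_c(t_0)^2}\int_0^{t_0}\bigl(s_c'^2-K(\gamma',E)\,s_c^2\bigr)\,dt\le \frac{f(t_0)^2}{s_c(t_0)^2}\int_0^{t_0}\bigl(s_c'^2+c^2s_c^2\bigr)\,dt .
\]
The last integral equals $s_c'(t_0)s_c(t_0)$, obtained by integrating $(s_cs_c')'$.
\end{itemize}
Hence $(\log f)'\le(\log s_c)'$ wherever $f>0$, so $f/s_c$ is non-increasing on $(0,1]$. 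Zeros of $f$ cannot occur in $(0,1]$, by the no conjugate points hypothesis. Since $f(t)/s_c(t)\to\|J'(0)\|$ as $t\to 0^+$, we get $\|J(t)\|\le\|J'(0)\|\sinh(ct)/c$.

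\textbf{Estimate on $\|J'(t)\|$.} Write $J(t)=A(t)J'(0)$, where $A$ is the normal Jacobi tensor with $A(0)=0$ and $A'(0)=\mathrm{id}$. Since there are no conjugate points, $A(t)$ is invertible for $t>0$. The operator $U_A=A'A^{-1}$ is symmetric and satisfies the Riccati equation $U'+U^2+R=0$. We then have $J'(t)=U_A(t)J(t)$, so it suffices to bound the operator norm $\|U_A(t)\|$.
\begin{itemize}
\item \emph{Upper bound.} For a unit vector $x$, $\langle U_A(t)x,x\rangle$ is the index form of the Jacobi field $A(\cdot)A(t)^{-1}x$ on $[0,t]$. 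The computation of the first step therefore gives $U_A(t)\le c\coth(ct)\,\mathrm{id}$.
\item \emph{Lower bound.} I would use the standard monotonicity of Riccati solutions from Knieper's construction of $S$ and $U$: the solution $U_A$ starting at $+\infty$ at time $0$ dominates the stable solution, $U_A(t)\ge S'(t)S(t)^{-1}$. Comparing with the constant-curvature Riccati equation $u'+u^2-c^2=0$, the curvature bound $K\ge -c^2$ forces $S'S^{-1}\ge -c\,\mathrm{id}$.
\end{itemize}
Together these give $\|U_A(t)\|\le\max\{c\coth(ct),c\}=c\coth(ct)$. Therefore
\[
\|J'(t)\|\le c\coth(ct)\,\|J(t)\|\le c\coth(ct)\cdot\frac{\sinh(ct)}{c}\,\|J'(0)\|=\cosh(ct)\,\|J'(0)\|.
\]

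\textbf{Main obstacle.} The hard step is the lower bound $U_A\ge -c$. A lower curvature bound controls Riccati solutions only from above, so the lower bound must come from the global absence of conjugate points through the stable tensor $S$. This holds in the setting of the paper, where $M$ has no conjugate points, but not under the hypothesis of no conjugate points on $[0,1]$ alone. If the comparison with $S$ causes trouble, I would instead invoke \cite[Corollary 2.12]{Knieper} directly for this part, as the statement of the proposition suggests.
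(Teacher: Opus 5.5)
Your proof is correct in the setting where the paper actually uses the proposition. It also takes a different route, since the paper proves nothing here: both inequalities are quoted from Knieper's chapter, the derivative bound from \cite[Corollary 2.12]{Knieper}.

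\emph{First inequality.} This is the classical index-lemma proof of Rauch's theorem. It needs only the absence of conjugate points on $[0,1]$.

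\emph{Second inequality.} The sandwich $-c\,\mathrm{id}\le S'S^{-1}\le U_A(t)\le c\coth(ct)\,\mathrm{id}$, followed by $\|J'(t)\|\le c\coth(ct)\|J(t)\|$, is sound.
\begin{itemize}
\item The middle inequality holds because $U_A(t)\sim t^{-1}\mathrm{id}$ as $t\to 0^+$ while $S'S^{-1}$ stays bounded, and forward Riccati comparison preserves the order.
\item The left inequality needs no new comparison argument: it is exactly Lemma 2.2 of the paper, so you can quote it. Your suggested forward comparison with $u'+u^2-c^2=0$ would only give upper bounds. If you want to derive the lower bound yourself, run the comparison backward from the tensors $S_r$ with $S_r(r)=0$ and let $r\to\infty$.
\end{itemize}

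What your route buys is that it shows exactly where each hypothesis enters. Your caveat is actually stronger than you suggest. It is not a weakness of your method: the derivative estimate is false under the hypothesis of no conjugate points on $[0,1]$ alone.

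\emph{Counterexample.} Take a half-infinite cylinder of radius $\rho$ capped by a hemisphere, smoothed so that $K\ge 0\ge -c^2$. Let the geodesic be a meridian starting on the cylinder at distance $t_1=1-\rho(\pi/2-0.1)$ from the cap.
\begin{itemize}
\item The normal Jacobi field with $J(0)=0$, $\|J'(0)\|=1$ satisfies $\|J(t)\|=t$ on the cylinder.
\item On the cap it satisfies $\|J(t_1+s)\|=t_1\cos(s/\rho)+\rho\sin(s/\rho)$.
\item One checks that no two points of $\gamma|_{[0,1]}$ are conjugate.
\item Yet $\|J'(1)\|=\frac{t_1}{\rho}\cos(0.1)-\sin(0.1)$, which exceeds $\cosh(c)$ once $\rho$ is small.
\end{itemize}

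So the second inequality genuinely requires $M$ to have no conjugate points globally, so that $S$ exists and $\|S'S^{-1}\|\le c$. That is precisely the situation of Lemma 3.1, the only place the paper uses it. Your argument therefore covers every application, and you can drop the fallback to Knieper.
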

\begin{lemma}
	Assume that the curvature tensor satisfies $-c^2\text{id} \le R(t)$ (equivalently, that the sectional curvature is bounded below by $-c^2$) and that the manifold has no conjugate points. Then the stable Jacobi tensor $S(t)$ satisfies
	\begin{align}\label{Knip}
		\|\dot{S}(t)S^{-1}(t)\|_{\text{op}} \le c,
	\end{align}
	for every $t\ge 0$,
\end{lemma}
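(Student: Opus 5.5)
We need to prove that $\|\dot S(t)S^{-1}(t)\|_{\mathrm{op}}\le c$ for $t\ge 0$. The plan is to use the Riccati equation together with a comparison argument based on finite-time approximations. Recall Knieper's construction: for $r>0$, let $S_r(t)$ be the Jacobi tensor along $\gamma$ with $S_r(0)=\mathrm{id}$ and $S_r(r)=0$. This tensor exists and is invertible on $[0,r)$ because there are no conjugate points. The stable tensor is the limit $S(t)=\lim_{r\to\infty}S_r(t)$, with $\dot S(0)=\lim_{r\to\infty}\dot S_r(0)$.

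Set $B_r(t)=\dot S_r(t)S_r^{-1}(t)$ on $[0,r)$. This is a symmetric operator, since the Wronskian of $S_r$ with itself vanishes (both vanish at $t=r$). It solves the Riccati equation
$$\dot B_r+B_r^2+R(t)=0.$$
Likewise $B(t)=\dot S(t)S^{-1}(t)$ is symmetric and solves the same equation for all $t\in\mathbb R$. So it suffices to show that $-c\,\mathrm{id}\le B(t)\le c\,\mathrm{id}$, since a symmetric operator's norm is its spectral radius.

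\emph{Upper bound.} $B_r(t)\to-\infty$ as $t\to r^-$. Compare with the scalar constant-curvature solution $b_r(s)=-c\coth(c(r-s))$, which solves $\dot b+b^2-c^2=0$ and blows down at the same time. Since $R\ge -c^2$, we have
$$\dot B_r=-B_r^2-R\le -B_r^2+c^2.$$
The standard Riccati comparison (Eschenburg--Heintze), applied backward from the singularity at $s=r$, then gives $B_r(t)\le -c\coth(c(r-t))\,\mathrm{id}<0$. Because $S$ is the stable tensor, we pass to the limit $r\to\infty$ and obtain $B(t)\le -c\,\mathrm{id}\cdot(\text{limit})$. More precisely, $B(t)\le 0$, and in fact we only need $B(t)\le c\,\mathrm{id}$, which follows a fortiori.

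\emph{Lower bound.} This is the main obstacle, since it needs the no-conjugate-points hypothesis on the whole line. We use that $B(t)$ is defined for all $t\in\mathbb R$, and compare backward. Suppose some eigenvalue at time $t_0$ is below $-c$, i.e. $\langle B(t_0)x,x\rangle<-c$ for a unit vector $x$. The scalar comparison $\dot u\le -u^2+c^2$ shows that $u$ decreases toward $-\infty$ in finite forward time. To make the comparison rigorous for operators, one applies the comparison theorem to the matrix Riccati equation. Concretely, we compare $B$ with the solution $-c\coth(c(t-t_0-\varepsilon))$ type functions, or use the minimal eigenvalue function $\lambda(t)$, which satisfies $\lambda'\le c^2-\lambda^2$ in the barrier sense. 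Either route forces $B$ to blow down in finite forward time. But $S(t)$ is invertible for all $t$ (no conjugate points), so $B$ is smooth on all of $\mathbb R$, a contradiction. Hence $B(t)\ge -c\,\mathrm{id}$.

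Combining the two bounds gives $\|B(t)\|_{\mathrm{op}}\le c$ for all $t$, in particular for $t\ge 0$. The delicate point is justifying the eigenvalue barrier inequality, that is, a Riccati comparison for the smallest eigenvalue via the quadratic form along a fixed unit vector achieving the minimum. I would carry this out by the standard Dini-derivative argument, as in Knieper's Corollary 2.12.
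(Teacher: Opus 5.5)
Your lower-bound argument is sound, but the upper-bound step fails: the Riccati comparison runs in the opposite direction. The inequality $\dot B_r\le c^2-B_r^2$ says that $B_r$ is a subsolution of $\dot b=c^2-b^2$. A subsolution that lies below a solution at some time stays below it at all later times. By contraposition, if it lies strictly above a solution at some time, it lies strictly above it at all earlier times. Near the singularity, $B_r(t)=-\frac{1}{r-t}\,\mathrm{id}+\frac{r-t}{3}R(r)+O((r-t)^2)$ while $-c\coth(c(r-t))=-\frac{1}{r-t}-\frac{c^2(r-t)}{3}+O((r-t)^3)$. So $R\ge -c^2$ puts $B_r$ above the model near $r$, and propagating backward gives $B_r(t)\ge -c\coth(c(r-t))\,\mathrm{id}$, not $\le$.

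The flat case shows your inequality is false. There $B_r(t)=-\frac{1}{r-t}>-c\coth(c(r-t))$ for $c>0$, and your limit would give $B\le -c\,\mathrm{id}$ although $B\equiv 0$. Nor is $B\le 0$ available in general; that is a nonpositive-curvature phenomenon. So your first paragraph, done correctly, yields after $r\to\infty$ only $B(t)\ge -c\,\mathrm{id}$, the same bound your blow-up argument gives. The upper bound $B(t)\le c\,\mathrm{id}$ remains unproved.

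The paper does not prove this lemma; it simply quotes it from Knieper. To close the gap, you can run your own global argument backward in time on the largest eigenvalue $\Lambda(t)$ of $B(t)$. The same inequality $\dot B\le c^2-B^2$ gives $\Lambda(t-h)\ge \Lambda(t)+h(\Lambda(t)^2-c^2)+o(h)$. Hence if $\Lambda(t_0)>c$, then $\Lambda(t)\ge c\coth(c(t-t_1))$ on $(t_1,t_0]$ for some $t_1<t_0$, so $\Lambda$ blows up at $t_1$, contradicting the smoothness of $B$ on $\mathbb{R}$.

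Alternatively, follow Knieper's route. Compare the Jacobi tensor vanishing at $t-r$ (normalized to $\mathrm{id}$ at $t$) with $c\coth(c(s-t+r))$, forward from its singularity, to get $\dot U U^{-1}\le c\,\mathrm{id}$ for the unstable tensor. Then use the ordering $\dot S S^{-1}\le \dot U U^{-1}$, which is where the absence of conjugate points enters.

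Two smaller points. First, in your lower-bound paragraph the correct scalar model is $-c\coth(c(t_1-t))$, which blows down as $t\to t_1^-$. The function $-c\coth(c(t-t_0-\varepsilon))$ solves $\dot v=v^2-c^2$ instead. Your minimal-eigenvalue barrier route is fine. Second, both blow-up arguments rest on $S(t)$ being nonsingular for all $t\in\mathbb{R}$. This is a standard fact for geodesics without conjugate points, but you should cite or prove it rather than assume it.
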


\begin{remark}
	Inequality \eqref{Knip} implies that, for any stable Jacobi field $J_s(t)=S(t)x$,
	\[
	\|J'_s(t)\| = \|\dot{S}(t)S^{-1}(t) J_s(t)\| \le \|\dot{S}(t)S^{-1}(t)\|_{\text{op}} \|J_s(t)\| \le c \|J_s(t)\|.
	\]
	In particular, for $t=0$, $\|J'_s(0)\| \le c \|J_s(0)\|$.
\end{remark}
Another key ingredient is the following proposition, proved in \cite{cantoralpesin}, which establishes a uniform bound for the differential of the exponential map under suitable curvature assumptions on the manifold. This estimate plays a crucial role in the proofs of both Ruelle's inequality and the Pesin entropy formula. We recall it here for the reader's convenience.
\begin{proposition}[{\cite[Proposition 2.2]{cantoralpesin}}]
	Let $N$ be a complete Riemannian manifold and suppose that the curvature tensor is uniformly bounded. Then, there exists $t_0>0$ such that for every $x\in N$ and every $v,w\in T_xN$ with $\left\| v\right\| =\left\| w\right\| =1$, we have
	\begin{align*}
		\left\| d(exp_x)_{tv}w\right\| \le \dfrac{5}{2}, \hspace{0.5cm} \forall \left| t\right| \le t_0.
	\end{align*}
\end{proposition}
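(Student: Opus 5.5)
The plan is to express $d(\exp_x)_{tv}w$ through a Jacobi field and then bound that field with an elementary Gronwall-type estimate that uses only the uniform bound on the curvature tensor. Fix $\Lambda>0$ with $\|R\|\le \Lambda^2$ on all of $N$ (this is the hypothesis). For $x\in N$ and unit vectors $v,w\in T_xN$, let $\sigma(s)=\exp_x(sv)$; this is a unit-speed geodesic, defined for all $s$ by completeness. Let $J$ be the Jacobi field along $\sigma$ with $J(0)=0$ and $J'(0)=w$. The standard variation argument, applied to the variation $(s,\epsilon)\mapsto \exp_x(s(v+\epsilon w))$, gives
\begin{align*}
J(s)=d(\exp_x)_{sv}(sw), \qquad\text{so}\qquad d(\exp_x)_{tv}w=\frac{J(t)}{t}\quad (t\neq 0),
\end{align*}
while for $t=0$ we have $d(\exp_x)_0 w=w$, which has norm $1$. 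For $t<0$ we replace $(v,t)$ by $(-v,-t)$, so it suffices to treat $t>0$.

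The key step is to bound $\|J(t)\|$ by $\sinh(\Lambda t)/\Lambda$. I would not use the Rauch comparison (Proposition 2.1) here: it requires knowing there are no conjugate points on $[0,t]$, and $N$ is an arbitrary complete manifold. Instead I would work in a parallel orthonormal frame along $\sigma$, where the Jacobi equation \eqref{jacobi} becomes $J''=-R(s)J$ with $\|R(s)\|_{\mathrm{op}}\le\Lambda^2$. Integrating twice from the initial data $J(0)=0$, $J'(0)=w$ gives
\begin{align*}
\|J(s)\|\le s+\Lambda^2\int_0^s (s-u)\,\|J(u)\|\,du .
\end{align*}
The function $g(s)=\sinh(\Lambda s)/\Lambda$ satisfies the corresponding equality, since $g''=\Lambda^2 g$, $g(0)=0$ and $g'(0)=1$. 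A comparison argument for this Volterra inequality, whose kernel is nonnegative, then yields $\|J(s)\|\le \sinh(\Lambda s)/\Lambda$ for all $s\ge 0$. Concretely, I would iterate the inequality (Picard iteration) and compare term by term with the power series of $g$. Alternatively, one can pass to the first-order system for $(\|J\|,\|J'\|)$ and apply Gronwall.

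From this bound we get
\begin{align*}
\|d(\exp_x)_{tv}w\|\le \frac{\sinh(\Lambda t)}{\Lambda t}\qquad\text{for } 0<t .
\end{align*}
The function $u\mapsto \sinh(u)/u$ is continuous and increasing on $(0,\infty)$, with limit $1$ as $u\to 0$. Hence we may choose $t_0>0$ with $\sinh(\Lambda t_0)/(\Lambda t_0)\le 5/2$; for instance $\Lambda t_0=2$ works, since $\sinh 2\approx 3.63<5$. Because $\Lambda$ depends only on the global curvature bound, $t_0$ is independent of $x$, $v$ and $w$, which gives the stated uniform estimate for all $|t|\le t_0$.

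The only delicate point is the comparison step. Because I avoid Rauch, no conjugate-point hypothesis is needed, but the integral comparison must be stated carefully, since $\|J\|$ need not be smooth where $J$ vanishes. Working with the integral inequality rather than with a differential inequality for $\|J\|$ handles this.
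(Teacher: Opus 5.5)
Your argument is correct. The paper itself gives no proof of this statement: it quotes it from \cite[Proposition 2.2]{cantoralpesin} and uses it as a black box, so there is no in-text argument to compare yours against.

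Your route is sound. You write $d(\exp_x)_{tv}w=J(t)/t$ for the Jacobi field with $J(0)=0$, $J'(0)=w$, and you bound $\|J(t)\|\le\sinh(\Lambda t)/\Lambda$ using the Volterra inequality $\|J(s)\|\le s+\Lambda^2\int_0^s(s-u)\|J(u)\|\,du$. The one step you should write out explicitly is the remainder in the Picard iteration. After $n$ steps it is bounded by $\sup_{[0,s]}\|J\|\cdot(\Lambda s)^{2n}/(2n)!$, which tends to $0$. Hence the iterates converge to $\sinh(\Lambda s)/\Lambda$, and monotonicity of the operator, which has a nonnegative kernel, gives the comparison.

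One correction to your motivation: the Rauch comparison is not actually unavailable here. Since $\|R\|\le\Lambda^2$ forces the sectional curvature into $[-\Lambda^2,\Lambda^2]$, comparison with the sphere of curvature $\Lambda^2$ shows that no geodesic of $N$ has conjugate points on $(0,\pi/\Lambda)$. The upper Rauch estimate, which is the content of Proposition 2.1 after rescaling, then gives the same bound $\|J(t)\|\le\sinh(\Lambda t)/\Lambda$ for $t<\pi/\Lambda$. That is all you need once $t_0$ is small.

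What your Volterra comparison buys is independence from that conjugate-radius fact, together with a bound valid for all $t\ge 0$. The cost is proving the comparison lemma by hand. The Rauch route is shorter if you are willing to quote both directions of Rauch's theorem.
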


\subsection{Lyapunov exponents}
Let $(M,g)$ be a Riemannian manifold and let $f:M\rightarrow M$ be a $C^1$-diffeomorphism. A point $x$ is said to be (Lyapunov-Perron) regular if there exist numbers $$\left\lbrace \lambda_1(x)<\lambda_2(x)<\ldots<\lambda_{l(x)}(x)\right\rbrace,$$ called Lyapunov exponents, and an invariant splitting $$T_xM=H_1(x)\oplus H_2(x)\oplus \ldots \oplus H_{l(x)}(x),$$ such that for every vector $v\in H_i(x)\setminus \left\lbrace 0\right\rbrace $, we have  
\begin{align*}
	\lim_{n\rightarrow \pm \infty} \dfrac{1}{n}\log \left\|df^n_x(v) \right\|=\lambda_i(x) 
\end{align*}
and
\begin{align*}
	\lim_{n\rightarrow \pm \infty} \dfrac{1}{n}\log\left| \det\left( df^n_x\right) \right|=\sum_{i=1}^{l(x)}  \lambda_i(x)\cdot\dim (H_i(x)). 
\end{align*}
A fundamental property of Lyapunov exponents is that the exponent of every non-zero vector is determined by its decomposition with respect to the Oseledets splitting. More precisely, if $x$ is a regular point and $$v=\sum_{i}v_i \in T_xM, \hspace{0.5cm} v_i\in H_i(x),$$ then the forward Lyapunov exponent of $v$ is the largest exponent among those corresponding to the non-zero components $v_i$.\\ 
Let $\Lambda$ denote the set of regular points. By Oseledets theorem (see \cite{oseledec}), if $\mu$ is an $f$-invariant probability measure on $M$ such that $\log^+ \left\| df^{\pm 1} \right\|$ is $\mu$-integrable, then the set $\Lambda$ has full $\mu$-measure. Moreover, the functions $x\rightarrow \lambda_i(x)$ and $x\rightarrow \dim(H_i(x))$ are $\mu$-measurable and $f$-invariant. In particular, if $\mu$ is ergodic, they are $\mu$-almost everywhere constant.\\

Since the entropy of a geodesic flow is defined as the entropy of its time-one map, all the preceding notions will be applied to the diffeomorphism $\phi^1$. Accordingly, throughout this paper, whenever we refer to Lyapunov exponents, the Oseledets decomposition, or regular points of the geodesic flow, we mean those associated with the time-one map $\phi^1$.	
	
\section{Existence of Lyapunov exponents}
Before proving Theorem 1.1, we establish that for every $\theta \in SM$, and for every Jacobi field $J$ along $\gamma_\theta$ satisfying $ \left\| J(0) \right\|^2+ \left\| J'(0) \right\|^2=1$, both $ \left\| J(1) \right\|$ and $ \left\| J'(1) \right\|$ are bounded above by a constant depending only on the lower bound of the sectional curvature.
\begin{lemma}
	Let $M$ be a complete Riemannian manifold without conjugate points and sectional curvature bounded below by $-c^2$, where $c\ge 0$. Then, for every $\theta\in SM$ and every $\xi\in T_\theta SM$ with $\left\| \xi \right\|=1$, the Jacobi field $J_\xi$ along $\gamma_\theta$, determined by $\xi$, satisfies
	\begin{align*}
		\left\| J_\xi(1) \right\| \le C_1(c), \quad \left\| J'_\xi(1) \right\|\le C_2(c) 
	\end{align*}
	where
	\begin{align*}
		&C_1(c) = 1 + (1+2c) \frac{\sinh (c)}{c} + 2e^c,\\
		&C_2(c) =  (1+2c) \cosh (c) + 2c e^{c}.
	\end{align*}
\end{lemma}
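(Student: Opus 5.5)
Write $J_\xi(0)=a$ and $J'_\xi(0)=b$, so that $\|a\|^2+\|b\|^2=1$ and in particular $\|a\|,\|b\|\le 1$. The plan is to decompose $J_\xi$ as a sum of a Jacobi field vanishing at $0$ and a stable Jacobi field. Rauch comparison (Proposition 2.1) controls the first piece, and Lemma 2.2 controls the second. The tangential part of $J_\xi$ (the component along $\gamma'_\theta$) is affine, $(\alpha+\beta t)\gamma'$ with $|\alpha|,|\beta|\le 1$. At $t=1$ it contributes at most $2$ to $\|J(1)\|$ and at most $1$ to $\|J'(1)\|$, and these contributions are absorbed in the constants. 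So the main work is for normal Jacobi fields, where $\langle b,v\rangle=0$ for $\xi\in T_\theta SM$.

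First I set $J_s(t)=S(t)a$, the stable Jacobi field with $J_s(0)=a$. By the Remark after Lemma 2.2, $\|J_s'(t)\|\le c\|J_s(t)\|$ for $t\ge0$, so in particular $\|J_s'(0)\|\le c\|a\|\le c$. Integrating $\frac{d}{dt}\|J_s\|\le\|J_s'\|\le c\|J_s\|$ gives $\|J_s(t)\|\le e^{ct}\|a\|$. Hence $\|J_s(1)\|\le e^c$ and $\|J_s'(1)\|\le ce^c$.

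Next I set $Y=J_\xi-J_s$. This is a normal Jacobi field with $Y(0)=0$ and $Y'(0)=b-J_s'(0)$, so $\|Y'(0)\|\le 1+c$. Since $M$ has no conjugate points, Proposition 2.1 applies on $[0,1]$ and gives $\|Y(1)\|\le(1+c)\sinh(c)/c$ and $\|Y'(1)\|\le(1+c)\cosh(c)$. Adding the two pieces, together with the tangential contribution, gives bounds of the form
\[
\|J_\xi(1)\|\le 1+(1+2c)\frac{\sinh c}{c}+2e^c,\qquad \|J_\xi'(1)\|\le (1+2c)\cosh c+2ce^c.
\]
The stated constants have some slack: the factor $1+2c$ instead of $1+c$, and $2e^c$ instead of $e^c$. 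This slack comfortably covers a less careful splitting, for example applying the decomposition separately to the horizontal and vertical parts of $\xi$, or bounding $\|J_s'(0)\|\le c$ twice. For $c=0$ one reads $\sinh(c)/c$ as $1$; both Proposition 2.1 and Lemma 2.2 hold as limits, or one replaces $c$ by any $c'>0$ and lets $c'\to0$.

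The main obstacle is justifying the use of the stable tensor. Its existence needs no conjugate points together with the lower curvature bound (Knieper's construction), and Lemma 2.2 needs $t\ge0$, which is the range we use. One must also check that $S(t)$ is defined on the normal bundle, so $a$ should be the normal component of $J_\xi(0)$. This is why I handle the tangential component separately first. The remaining steps are routine comparison estimates.
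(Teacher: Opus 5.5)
You follow the paper's route: split off the tangential component, compare the normal part with the stable field $S(t)J_\xi^\perp(0)$, and apply Proposition 2.1 to the difference, which vanishes at $0$. Your bound $\|J_\xi^\perp(0)\|\le 1$ (the paper uses $2$) and your $c'\downarrow 0$ argument for $c=0$ are fine, and the latter is cleaner than the paper's check of the flat case.

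\textbf{The gap} is in the bookkeeping for the tangential part. You allow it to be $(\alpha+\beta t)\gamma_\theta'(t)$ with $|\beta|\le 1$, so it contributes up to $1$ to $\|J_\xi'(1)\|$. You then assert this is absorbed in the constants. It is not. Your normal-part estimate gives $(1+c)\cosh c+ce^c$, and
\[
C_2(c)-\bigl[1+(1+c)\cosh c+ce^c\bigr]=c(\cosh c+e^c)-1,
\]
which is negative for small $c$. At $c=0$ we have $C_2(0)=1$, so there is no room at all for an extra $+1$. The slack you point to in $C_2$ is $c\cosh c+ce^c$, and it vanishes as $c\to 0$. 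As written, your argument therefore does not prove the bound on $\|J'_\xi(1)\|$ for small $c$. The $C_1$ bound does survive your crude count, since $C_1(c)-\bigl[2+(1+c)\frac{\sinh c}{c}+e^c\bigr]=\sinh c+e^c-1\ge 0$.

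\textbf{The fix} uses a fact you already wrote down but did not apply to the tangential part. Since $\xi\in T_\theta SM$,
\[
\beta=\langle J'_\xi(0),\gamma'_\theta(0)\rangle=\langle K_\theta(\xi),v\rangle=0 .
\]
So the tangential component is $\alpha\,\gamma'_\theta(t)$ with $|\alpha|\le 1$. It contributes at most $1$ to $\|J_\xi(1)\|$ and exactly $0$ to $\|J'_\xi(1)\|$; this is how the paper handles it. With this correction your estimates give $\|J_\xi(1)\|\le 1+(1+c)\frac{\sinh c}{c}+e^c\le C_1(c)$ and $\|J'_\xi(1)\|\le (1+c)\cosh c+ce^c\le C_2(c)$. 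Your limiting argument $c'\downarrow 0$ then yields the case $c=0$.
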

\begin{proof}
	Fix $\theta=(x,v)\in SM$ and consider $\xi\in T_\theta SM$ with $\left\| \xi \right\|=1$. By \eqref{iso} the Jacobi field $J_\xi(t)$ along the unit geodesic $\gamma_\theta(t)$ satisfies $J_\xi(0)=d\pi_\theta(\xi)$ and $J'_\xi(0)=K_\theta(\xi)$. Write $$J_\xi = J^\top_\xi + J^\perp_\xi,$$ where $J^\top_\xi$ and $J^\perp_\xi$ denote the tangential and normal components of $J_\xi$ with respect to the geodesic $\gamma_\theta$, respectively. Suppose that $c>0$.\\
	
	\textbf{Boundedness of the tangential component $J^\top_\xi$:}\\
	The tangential component of a Jacobi field along a geodesic is explicitly given by (see \cite{lee})
	\begin{align*}
		J^\top_\xi (t)&= \left\langle J_\xi(t),\gamma'_\theta(t)\right\rangle \gamma'_\theta (t)\\
		&= \left( \left\langle J'_\xi(0),\gamma'_\theta (0)\right\rangle t + \left\langle J_\xi(0),\gamma'_\theta (0)\right\rangle \right) \gamma'_\theta(t).
	\end{align*}
	Since $\xi\in T_\theta SM$, we have that $0=\left\langle k_\theta(\xi),v\right\rangle=\left\langle J'_\xi(0),\gamma'_\theta (0)\right\rangle$. Then $$ J^\top_\xi (t)=\left\langle J_\xi(0),\gamma'_\theta (0)\right\rangle \gamma'_\theta(t)$$ and therefore for $t=1$ we obtain
	\begin{align}\label{tangential}
		\left\| J_\xi^{\top}(1)\right\|\le 1 \hspace{0.5cm} \text{and} \hspace{0.5cm}   \left\| \left(J_\xi^{\top}\right)'(1)\right\|=0.
	\end{align}
	
	\textbf{Boundedness of the normal component $J^\perp_\xi$:}\\
	We have that 
	\begin{align*}
		J_\xi^\perp (t) &= J_\xi(t) - J^\top_\xi(t) \\
		&= J_\xi(t) - \left\langle J_\xi(0),\gamma'_\theta (0)\right\rangle \gamma'_\theta(t).
	\end{align*}
	Then
	\begin{align}\label{nort}
		\left\| J_\xi^\perp (0)   \right\| \le  \left\| J_\xi (0)   \right\| +  \left\| J_\xi (0)   \right\|\le 2 \hspace{0.4cm}\text{and}\hspace{0.4cm}  \left\|   \left( J_\xi^\perp\right)' (0) \right\| =  \left\| J'_\xi (0)   \right\| \le 1.
	\end{align}
	Let $J_s$ be the normal stable Jacobi field along $\gamma_\theta$ with the same initial value as $J_\xi^\perp$, that is, $$J_s(0)=J_\xi^\perp(0).$$
	Since $S(0)=\text{id}$, such a Jacobi field is uniquely determined and is given by
	$$J_s(t)=S(t)J_\xi^\perp(0).$$ Define
	\begin{align*}
		\Tilde{J}(t)=   J_\xi^\perp(t) - J_s(t).
	\end{align*}
	Since the Jacobi equation \eqref{jacobi} is linear, $\tilde{J}$ is also a normal Jacobi field. Moreover, its initial conditions are
	\begin{align*}
		\tilde{J}(0) &= J_\xi^\perp(0) - J_s(0) = 0, \\
		\tilde{J}'(0) &= \left( J_\xi^\perp\right)'(0) - J'_s(0).
	\end{align*}
	Since $\tilde{J}(0)=0$, by Rauch's comparison theorem we have that
	\begin{align}\label{rauch}
		\left\|  \tilde{J}(1) \right\| \le  \left\|\tilde{J}'(0)\right\| \frac{\sinh (c)}{c}.
	\end{align}
	Moreover, applying Proposition 2.1, we obtain
	\begin{align}\label{derivada}
		\left\|  \tilde{J}'(1) \right\|\le \left\|\tilde{J}'(0)\right\| \cosh (c).
	\end{align}
	By definition of $\tilde{J}$, Remark 2.3 and \eqref{nort} we obtain
	\begin{align}\label{norma6}
		\nonumber
		\left\|\tilde{J}'(0)\right\| &\le \left\|(J_\xi^\perp)'(0)\right\| + \left\|J'_s(0)\right\|\\ \nonumber
		&\le 1+ c  \left\|J_s(0)\right\| \\ \nonumber
		&\le  1 + c \left\|J_\xi^\perp(0)\right\| \\ 
		&\le 1 + 2c.
	\end{align}
	Substituting the estimate in \eqref{norma6} into \eqref{rauch} and \eqref{derivada}, we obtain
	\begin{align}\label{Jtilde1}
		\left\|  \tilde{J}(1) \right\| &\le (1+2c) \frac{\sinh (c)}{c}
	\end{align}
	and
	\begin{align}\label{Jtilde2}
		\left\|  \tilde{J}'(1) \right\| &\le (1+2c) \cosh (c).
	\end{align}
	For the stable Jacobi field $J_s$, Remark 2.3 gives
	\begin{align*}
		\frac{d}{dt}\left\|J_s(t)\right\|^2 = 2\left\langle J'_s(t), J_s(t)\right\rangle \le 2\|J'_s(t)\| \|J_s(t)\| \le 2c \|J_s(t)\|^2.
	\end{align*}
	Therefore,
	\begin{align*}
		\frac{d}{dt}\left\| J_s(t)\right\|  \le c \left\|J_s(t)\right\|.   
	\end{align*}
	Integrating this inequality from $0$ to $1$ and using \eqref{nort}, we obtain
	\begin{align}\label{estable}
		\left\| J_s(1)\right\|  \le  \left\| J_s(0)\right\| e^c = \left\| J_\xi^\perp(0)\right\| e^c \le 2e^c.
	\end{align}
	Moreover, using Remark 2.3 once again, we have
	\begin{align}\label{establed}
		\left\| J'_s(1)\right\|  \le c \left\| J_s(1)\right\| \le 2c e^c.
	\end{align}
	Finally, combining the estimates \eqref{Jtilde1}, \eqref{Jtilde2}, \eqref{estable} and \eqref{establed}, we obtain
	\begin{align}\label{casi1}
		\left\| J_\xi^\perp(1)\right\| \le \left\| \tilde{J}(1)\right\|  + \left\| J_s(1)\right\| \le (1+2c) \frac{\sinh (c)}{c} + 2e^c
	\end{align}
	and
	\begin{align}\label{casi2}
		\left\| (J_\xi^\perp)'(1)\right\| \le \left\| \tilde{J}'(1)\right\|  + \left\| J'_s(1)\right\| \le (1+2c) \cosh (c) + 2ce^c.
	\end{align}
	
	\textbf{Boundedness of the Jacobi field $J_\xi$:}\\
	Define the constants
	\begin{align*}
		&C_1(c) = 1 + (1+2c) \frac{\sinh (c)}{c} + 2e^c,\\
		&C_2(c) =  (1+2c) \cosh (c) + 2c e^{c}.
	\end{align*}
	Combinig \eqref{tangential}, \eqref{casi1} and \eqref{casi2}, we obtain
	\begin{align*}
		\left\| J_\xi(1)\right\| &\le \left\| J_\xi^\top(1)\right\| + \left\| J_\xi^\perp(1)\right\|\le  C_1(c),\\
		\left\| J_\xi'(1)\right\| &\le \left\| (J_\xi^\top)'(1)\right\| + \left\| (J_\xi^\perp)'(1)\right\| \le C_2(c). 
	\end{align*}
	For $c=0$, we interpret $\dfrac{\sinh (c)}{c} = 1$ by continuity. Hence,$$C_1(0) = 1 + 1 + 2 = 4, \hspace{0.5cm}C_2(0) = 1.$$ 
	These values are consistent with the flat case. Indeed, if the curvature vanishes identically, every Jacobi field along $\gamma_\theta$ can be written as
	\begin{align*}
		J(t)=(a+bt)V(t),
	\end{align*}
	where $V(t)$ is a parallel vector field along $\gamma_\theta$. Since parallel vector fields have constant norm,
	\begin{align*}
		\left\|J(0)\right\| = \left| a \right| \left\|V(t)\right\| \hspace{0.4cm}\text{and}\hspace{0.4cm}
		\left\|J'(0)\right\| = \left| b \right| \left\|V(t)\right\|,
	\end{align*}
	for every $t\in \mathbb{R}$. Therefore, since $ \left\| J(0) \right\|^2+ \left\| J'(0) \right\|^2=1$, it follows that
	\begin{align*}
		\left\|J(1)\right\| 
		&\le \left| a \right| \left\|V(1)\right\|+\left| b \right| \left\|V(1)\right\| \\
		&= \|J(0)\|+\|J'(0)\| \\
		&\le \sqrt{2\bigl(\|J(0)\|^2+\|J'(0)\|^2\bigr)}\\
		&= \sqrt{2}.
	\end{align*}
	Moreover,
	\begin{align*}
		J'(t)=bV(t),
	\end{align*}
	and therefore
	\begin{align*}
		\left\|J'(1)\right\| = \left| b \right| \left\|V(1)\right\| = \left\|J'(0)\right\| \le 1.
	\end{align*}
	Consequently, the constants $C_1(0)$ and $C_2(0)$ provide valid bounds for the norms of Jacobi fields and their covariant derivatives at time $t=1$ in the flat case as well. This completes the proof.
\end{proof}
\begin{remark}
	Under the same assumptions as in Lemma 3.1, applying the lemma to the Jacobi field $\tilde J_\xi(t):= J_\xi(-t)$ along the reversed geodesic $\tilde{\gamma}_\theta(t):=\gamma_\theta(-t)$, for $t>0$, we obtain
	\begin{align*}
		\left\| J_\xi(-1)\right\| &\le C_1(c),\\
		\left\| J_\xi'(-1)\right\| &\le C_2(c). 
	\end{align*}
\end{remark}

\textbf{Proof of Theorem 1.1.} Fix $\theta\in SM$ and consider $\xi\in T_\theta SM$ with $\left\| \xi \right\|=1$. Recall that $$d\phi^1_{\theta}(\xi) = (J_\xi(1),J'_\xi(1)).$$ Therefore, by Lemma 3.1,
\begin{align*}
	\left\|d\phi^1_{\theta}(\xi)\right\|^2 = \left\| J_\xi(1)\right\|^2 + \left\| J_\xi'(1)\right\|^2 \le C_1(c)^2 + C_2(c)^2.
\end{align*}
Hence,
\begin{align}\label{cotasup}
	\left\|d\phi^1_{\theta}\right\|   \le \sqrt{C_1(c)^2 + C_2(c)^2}.
\end{align}
Similarly, applying Remark 3.2, we obtain
\begin{align*}
	\left\|d\phi^{-1}_{\theta}\right\|   \le \sqrt{C_1(c)^2 + C_2(c)^2}.
\end{align*}
Taking the supremum over all $\theta \in SM$, we obtain a uniform bound for $\|d\phi^{\pm 1}\|$. Consequently, $$\log^+\|d\phi^{\pm 1}\| \in L^1(\mu)$$ for every $\phi^t$-invariant probability measure $\mu$ on $SM$. This completes the proof.$\hfill\square$

\section{Ruelle's inequality}
In this section, we prove Theorem 1.2 by adapting the proof of Ruelle's inequality for Anosov geodesic flows given in \cite{cantoralpesin}. The argument does not extend directly to our setting, since the geodesic flow is no longer uniformly hyperbolic. The main additional ingredient is the application of Egorov's theorem to obtain a compact set of arbitrarily large measure on which the norm and the conorm of the derivative of the flow satisfy uniform exponential estimates governed by the extremal Lyapunov exponents. These estimates are crucial for the construction of the measurable partition. The results established below provide the necessary ingredients to carry out this extension, thereby proving Ruelle's inequality for the more general class of complete Riemannian manifolds without conjugate points.\\

Let $M$ be a complete Riemannian manifold satisfying the hypotheses of Theorem 1.2, and let $\mu$ be a $\phi^t$-invariant probability measure on $SM$. By Theorem 1.1, there exists a measurable set $\Lambda\subset SM$ with $\mu(SM\setminus\Lambda)=0$ such that, for every $\theta\in \Lambda$, the Lyapunov exponents of $\phi$ exist. More precisely, there exist real numbers $$\left\lbrace \lambda_1(\theta)<\lambda_2(\theta)<\ldots<\lambda_{l(\theta)}(\theta)\right\rbrace$$  and an invariant splitting $$T_\theta SM=\bigoplus_{i=1}^{l(\theta)}H_i(\theta),$$ such that, for every vector $\omega\in H_i(\theta)\setminus \left\lbrace 0\right\rbrace $, we have  
\begin{align*}
	\lim_{n\rightarrow \pm \infty} \dfrac{1}{n}\log \left\|d\phi^n_\theta(\omega) \right\|=\lambda_i(\theta). 
\end{align*}
For each $\theta\in \Lambda$, denote by
\begin{align*}
	E^{s}(\theta)&= \oplus_i \left\lbrace H_i(\theta): \lambda_i(\theta)< 0\right\rbrace,\\
	E^{c}(\theta)&=\oplus_i \left\lbrace H_i(\theta): \lambda_i(\theta)=0\right\rbrace,\\
	E^u(\theta)&= \oplus_i \left\lbrace H_i(\theta): \lambda_i(\theta)>0\right\rbrace.
\end{align*}
For simplicity, we assume that $\mu$ is an ergodic $\phi^t$-invariant probability measure on $SM$. In this case, we denote the Lyapunov exponents by $\left\lbrace \lambda_i\right\rbrace$  and their corresponding multiplicities by $\left\lbrace k_i\right\rbrace$. We also write $\lambda_{\min}$ and $\lambda_{\max}$ for the smallest and largest Lyapunov exponents, respectively. The proof for the non-ergodic case follows from the ergodic decomposition of such a measure. We can also assume that $\phi=\phi^1$ is an ergodic transformation with respect to $\mu$. If it is not the case, we can choose an ergodic time $t_1$ for $\mu$ and prove the theorem for the map $\phi^{t_1}$. This is possible since there exist infinitely many $t\in \mathbb{R}$ for which $\phi^t$ is ergodic (see \cite[Theorem 3.2]{Losert}). The proof of the theorem for the map $\phi^{t_1}$ implies the proof for the map $\phi$ because the entropy of $\phi^{t_1}$ and the Lyapunov exponents are both $t_1$-multiples of the respective values of $\phi$.\\

Consider $\Psi$ the velocity-reversal map defined by
\begin{align*}
	\Psi:SM\rightarrow SM, \hspace{0.2cm} \Psi(x,v)=(x,-v).
\end{align*}
We know that $\Psi$ is an isometry and satisfies
\begin{align}\label{isometria}
	\Psi\circ \phi^t = \phi^{-t}\circ \Psi, \hspace{1cm} \forall\hspace{0.1cm} t\in \mathbb{R}.
\end{align}
The following proposition shows that the map $\Psi$ sends each Oseledets subspace to the Oseledets subspace corresponding to the opposite Lyapunov exponent. Since $\mu$ is assumed to be ergodic, the Lyapunov exponents and their multiplicities are $\mu$-almost everywhere constant. Therefore, the Lyapunov spectrum is symmetric with respect to the origin, and $E^s$ and $E^u$ have the same dimension in $\Lambda$.
\begin{proposition}
	Let $\theta\in \Lambda$, and let
	\begin{align*}
		T_\theta SM=\bigoplus_{i=1}^k H_i(\theta)
	\end{align*}
	be the Oseledets decomposition at $\theta$. Then
	$\Psi(\theta)\in\Lambda$ and  
	\begin{align*}
		d\Psi_\theta\big(H_i(\theta)\big)=H_{-\lambda_i}(\Psi(\theta)),
	\end{align*}
	where $H_{-\lambda_i}(\Psi(\theta))$ denotes the Oseledets subspace associated with the exponent $-\lambda_i$. In particular,
	\begin{align*}
		\dim H_i(\theta)=\dim H_{-\lambda_i}(\Psi(\theta)).  
	\end{align*}
\end{proposition}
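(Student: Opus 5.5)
The plan is to transport the whole Oseledets structure at $\theta$ to $\Psi(\theta)$ through the conjugacy \eqref{isometria}, and then read off the exponents. Differentiating \eqref{isometria} at $\theta$ with $t=n\in\mathbb{Z}$ gives
\begin{align*}
	d\phi^{-n}_{\Psi(\theta)}\circ d\Psi_\theta = d\Psi_{\phi^n(\theta)}\circ d\phi^n_\theta ,\qquad n\in\mathbb{Z}.
\end{align*}
Since $\Psi$ is an isometry of the Sasaki metric, each $d\Psi_\eta$ is a linear isometry $T_\eta SM\to T_{\Psi(\eta)}SM$. Applying the identity to $\omega\in H_i(\theta)\setminus\{0\}$ gives
\begin{align*}
	\left\|d\phi^{-n}_{\Psi(\theta)}\big(d\Psi_\theta\omega\big)\right\|=\left\|d\phi^{n}_{\theta}(\omega)\right\| \qquad \text{for all } n\in\mathbb{Z}.
\end{align*}

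Now define the candidate splitting $\tilde H_j(\Psi(\theta)):=d\Psi_\theta(H_i(\theta))$, where the index $j$ is chosen so that the exponents $\tilde\lambda_j:=-\lambda_i$ are listed in increasing order; this simply reverses the order. The displayed norm identity, with $n$ replaced by $-n$, gives
\begin{align*}
	\lim_{n\to\pm\infty}\frac1n\log\left\|d\phi^n_{\Psi(\theta)}(d\Psi_\theta\omega)\right\|=\lim_{m\to\mp\infty}\frac{-1}{m}\log\left\|d\phi^{m}_\theta\omega\right\|=-\lambda_i .
\end{align*}
So every vector of $\tilde H_j$ has two-sided exponent $-\lambda_i$.

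Next we check the remaining requirements for regularity of $\Psi(\theta)$. Invariance of the splitting under $d\phi^1$ follows from the same conjugacy, since $d\phi^{1}_{\Psi(\theta)}\circ d\Psi_\theta=d\Psi_{\phi^{-1}\theta}\circ d\phi^{-1}_\theta$ and $H_i$ is invariant along the orbit. For the determinant condition, note that $|\det d\Psi|=1$, so
\begin{align*}
	\log|\det d\phi^n_{\Psi(\theta)}|=\log|\det d\phi^{-n}_\theta|.
\end{align*}
Dividing by $n$ and letting $n\to\pm\infty$ gives the limit $-\sum_i\lambda_i\dim H_i(\theta)=\sum_j\tilde\lambda_j\dim\tilde H_j$. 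Hence $\Psi(\theta)$ is Lyapunov–Perron regular, that is, $\Psi(\theta)\in\Lambda$, with exponents $\{-\lambda_i\}$.

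It remains to identify $\tilde H_j$ with the Oseledets subspace $H_{-\lambda_i}(\Psi(\theta))$. Here we use that at a regular point the subspaces are uniquely determined by the two-sided growth rates: $H_j$ is the set of vectors whose exponent is $\lambda_j$ as $n\to+\infty$ and also as $n\to-\infty$. This holds because forward growth detects the top nonzero component and backward growth detects the bottom one. Consequently $\tilde H_j=H_{-\lambda_i}(\Psi(\theta))$, and the dimension equality follows because $d\Psi_\theta$ is a linear isomorphism.

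The main obstacle is this uniqueness step. It must be stated carefully so that we do not implicitly assume $\Lambda$ is $\Psi$-invariant. Checking regularity of $\Psi(\theta)$ directly from the definition, as above, avoids that circularity. No curvature assumptions are needed at all: only \eqref{isometria} and the fact that $\Psi$ is an isometry.
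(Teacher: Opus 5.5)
Your proof is correct and follows essentially the same route as the paper: transport the splitting by $d\Psi_\theta$ using the conjugacy $\Psi\circ\phi^t=\phi^{-t}\circ\Psi$ and the fact that $\Psi$ is an isometry, check that the transported splitting is $d\phi$-invariant with two-sided exponents $-\lambda_i$, and conclude by uniqueness of the Oseledets splitting. You go slightly further than the paper in two places, both of which the paper leaves implicit: you explicitly verify the determinant condition in the definition of regularity, and you justify the uniqueness step by characterizing each $H_j$ as the vectors whose forward and backward exponents both equal $\lambda_j$.
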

\begin{proof}
	Consider $\omega\in H_i(\theta)\setminus \left\lbrace 0\right\rbrace$. From \eqref{isometria} we have that
	\begin{align*}
		\left\| d\phi_\theta^{-n}(\omega)\right\|&=\left\| d\Psi_{\phi^n(\Psi(\theta))}\circ d\phi_{\Psi(\theta)}^n\circ d \Psi_\theta(\omega)\right\| \\
		&=\left\| d\phi^n_{\Psi(\theta)}(\eta)\right\|,
	\end{align*}
	where $\eta:= d \Psi_\theta(\omega)$. Since every non-zero vector in $H_i(\theta)$ has Lyapunov exponent $\lambda_i$,
	\begin{align*}
		\lim_{n\to+\infty}
		\frac1n
		\log
		\|d\phi^{-n}_\theta(\omega)\|
		=
		-\lambda_i.  
	\end{align*}
	Hence,
	\begin{align}\label{oposite}
		\lim_{n\rightarrow + \infty} \dfrac{1}{n}\log \left\|d\phi^n_{\Psi(\theta)}(\eta) \right\|= \lim_{n\rightarrow + \infty} \dfrac{1}{n}\log \left\|d\phi^{-n}_\theta(\omega) \right\|= -\lambda_i.
	\end{align}
	Similarly,
	\begin{align}\label{lya}
		\lim_{n\rightarrow + \infty} \dfrac{1}{n}\log \left\|d\phi^{-n}_{\Psi(\theta)}(\eta) \right\|= \lim_{n\rightarrow + \infty} \dfrac{1}{n}\log \left\|d\phi^{n}_\theta(\omega) \right\|= \lambda_i.   
	\end{align}
	Define
	\begin{align*}
		F_{i}(\Psi(\theta)):=d\Psi_\theta(H_i(\theta)). 
	\end{align*}
	Using \eqref{isometria} and the invariance of $H_i(\theta)$ we have that
	\begin{align*}
		d\phi_{\Psi(\theta)}(F_i (\Psi(\theta)))&=d\phi_{\Psi(\theta)}\circ d\Psi_\theta(H_i(\theta))\\
		&=d(\phi\circ \Psi)_\theta (H_i(\theta))\\
		&=d(\Psi\circ \phi^{-1})_{\theta}(H_i(\theta))\\
		&=d\Psi_{\phi^{-1}(\theta)} \circ d\phi^{-1}_\theta (H_i(\theta))\\
		&= d\Psi_{\phi^{-1}(\theta)} (H_i(\phi^{-1}(\theta)))\\
		&=F_i(\Psi(\phi^{-1}(\theta)))\\
		&=F_i(\phi(\Psi(\theta))).
	\end{align*}
	Hence, the subspaces $F_i(\Psi(\theta))$ are $d\phi$-invariant. Moreover, by \eqref{oposite} and \eqref{lya}, every non-zero vector in
	$F_i(\Psi(\theta))$ has Lyapunov exponent $-\lambda_i$. Since $d\Psi_\theta$ is an isomorphism,
	\begin{align*}
		T_{\Psi(\theta)}SM=\bigoplus_{i=1}^k F_i(\Psi(\theta)).
	\end{align*}
	Therefore, this is a $d\phi$-invariant splitting of $T_{\Psi(\theta)}SM$ with Lyapunov exponents $-\lambda_i$. Hence, $\Psi(\theta)\in\Lambda$. By the uniqueness of the Oseledets decomposition, we conclude that
	\begin{align*}
		d\Psi_\theta(H_i(\theta)) = F_i(\Psi(\theta))=H_{-\lambda_i}(\Psi(\theta))
	\end{align*}
	and
	\begin{align*}
		\dim H_i(\theta)=\dim H_{-\lambda_i}(\Psi(\theta)).    
	\end{align*}
	This completes the proof.
\end{proof}
Fix $\varepsilon>0$. For each $\theta\in \Lambda$ and $n\in \mathbb{N}$, denote by $\displaystyle \left\| d\phi^n_\theta\right\|^*=\inf_{\left\|\omega \right\| =1}\left\|d\phi^n_\theta(\omega) \right\|$ and define the following sequence of functions
\begin{align*}
	&f_n(\theta):=\dfrac{1}{n}\log\left\|d\phi^n_\theta\right\|, \\
	& g_n(\theta):=\dfrac{1}{n}\log  \left\|d\phi^n_\theta\right\|^* .
\end{align*}
By Furstenberg-Kesten theorem (see \cite{FK}) we have that
\begin{align}\label{maxima}
	\lim_{n\rightarrow+\infty}f_n(\theta)=\lim_{n\rightarrow+\infty}\dfrac{1}{n}\log\left\|d\phi^n_\theta\right\|=\lambda_{\max} ,\hspace{1cm} \forall\hspace{0.1cm} \theta\in \Lambda.
\end{align}
Similarly, for each $\theta \in \Lambda$, let $H_1(\theta)$ be  the Oseledets subspace corresponding to the smallest Lyapunov exponent $\lambda_{\min}$. Every non-zero vector $\omega\in T_\theta SM$ satisfies
\begin{align*}
	\lim_{n\rightarrow+\infty}\dfrac{1}{n}\log\left\|d\phi^n_\theta(\omega)\right\|=\lambda_i,
\end{align*}
where $\lambda_i$ is the largest Lyapunov exponent corresponding to a non-zero Oseledets component of $\omega$. Consequently, vectors in $H_1(\theta)$ realize the smallest possible exponential growth, while every vector having no component in $H_1(\theta)$ grows at a strictly larger exponential rate. It follows that
\begin{align}\label{minima2}
	\lim_{n\rightarrow+\infty}g_n(\theta)=\lim_{n\rightarrow+\infty}\dfrac{1}{n}\log\left\|d\phi^n_\theta\right\|^*=\lambda_{\min} ,\hspace{1cm} \forall\hspace{0.1cm} \theta\in \Lambda.
\end{align}
Applying Egorov's theorem to the convergences $\eqref{maxima}$ and \eqref{minima2}, there exists a compact set $K\subset \Lambda$ with $\mu(K)\ge 1-\varepsilon$ such that the splitting $$T_\theta SM=E^{s}(\theta)\oplus E^c(\theta)\oplus E^{u}(\theta)$$ is continuous when $\theta$ varies in $K$, and there exists $n_0\in \mathbb{N}$ such that
\begin{align}\label{comparacion}
	e^{n(\lambda_{\min} - \varepsilon)} < \left\|d\phi^n_{\theta}\right\|^*\le \left\|d\phi^n_{\theta}\right\|  < e^{n(\lambda_{\max}+\varepsilon)} 
\end{align}
for every $\theta\in K$ and every $n\ge n_0$.\\

We now proceed to establish several auxiliary results which will play a crucial role in the proof of Ruelle's inequality. We begin by showing that estimate \eqref{comparacion} yields a uniform comparison between $\left\| d\phi^m \right\|$ and $\left\| d\phi^m \right\|^*$ on the set $K$.
\begin{proposition}
	For large enough $m\in \mathbb{N}$, there exists $\kappa>1$, depending on $m$, such that
	\[
	\left\| d\phi^m_\theta \right\| \le \kappa \left\| d\phi^m_\theta \right\|^*
	\]
	for every $\theta\in K$.
\end{proposition}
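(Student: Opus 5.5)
Everything we need is already in the uniform estimate \eqref{comparacion} on the compact set $K$. Fix any $m\ge n_0$. For every $\theta\in K$, \eqref{comparacion} with $n=m$ gives the two bounds
\[
\left\| d\phi^m_\theta\right\| < e^{m(\lambda_{\max}+\varepsilon)}, \qquad \left\| d\phi^m_\theta\right\|^* > e^{m(\lambda_{\min}-\varepsilon)} .
\]
Dividing the first by the second yields
\[
\frac{\left\| d\phi^m_\theta\right\|}{\left\| d\phi^m_\theta\right\|^*} < e^{m(\lambda_{\max}-\lambda_{\min}+2\varepsilon)}
\]
uniformly in $\theta\in K$. We then set $\kappa:=e^{m(\lambda_{\max}-\lambda_{\min}+2\varepsilon)}$. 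This constant depends only on $m$ (and on the fixed $\varepsilon$ and the spectrum), and $\kappa>1$ because $\lambda_{\max}\ge\lambda_{\min}$ and $\varepsilon>0$. Note that the conorm is strictly positive because $d\phi^m_\theta$ is invertible, so the division is legitimate.

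Two points deserve a check, and neither is a serious obstacle. First, the uniformity: by the construction via Egorov's theorem, $n_0$ does not depend on $\theta\in K$, so the constant is uniform on $K$. Second, the spectrum: by ergodicity (ensured in the setup) $\lambda_{\max}$ and $\lambda_{\min}$ are constants. Proposition 4.1 even gives $\lambda_{\min}=-\lambda_{\max}$, so one may write $\kappa=e^{2m(\lambda_{\max}+\varepsilon)}$, although this symmetry is not needed for the statement.

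The only substantive input is the uniform two-sided bound \eqref{comparacion}, which is already established. If one wanted to avoid relying on Egorov, an alternative would be to use Theorem 1.1: $\|d\phi^{\pm1}\|\le C:=\sqrt{C_1(c)^2+C_2(c)^2}$ everywhere. Then $\|d\phi^m\|\le C^m$ and $\|d\phi^m\|^*\ge C^{-m}$, which gives $\kappa=C^{2m}$ on all of $SM$. This would be a fallback, but the Egorov-based constant is the one suited to the subsequent arguments.
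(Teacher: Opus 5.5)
Your proof is correct and matches the paper's argument: for $m\ge n_0$ you take the ratio of the two uniform bounds in \eqref{comparacion} and set $\kappa=e^{m(\lambda_{\max}-\lambda_{\min}+2\varepsilon)}>1$. This is uniform on $K$ because $n_0$ comes from Egorov's theorem. Your fallback $\kappa=C^{2m}$ via Theorem 1.1 is also valid on all of $SM$ (using $\|d\phi^m_\theta\|^*=\|d\phi^{-m}_{\phi^m\theta}\|^{-1}\ge C^{-m}$), though it is not needed.
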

\begin{proof}
	For $m\ge n_0$, it follows from \eqref{comparacion} that
	\begin{align*}
		\|d\phi^m_\theta\| \le e^{m(\lambda_{\max}-\lambda_{\min}+2\varepsilon)} \|d\phi^m_\theta\|^*
	\end{align*}
	for every $\theta\in K$. By setting $\kappa = e^{m(\lambda_{\max}-\lambda_{\min}+2\varepsilon)}>1$ the proposition follows.
\end{proof}
We now establish a global estimate, valid on the unit tangent bundle $SM$, showing that $\left\| d\phi^m_\theta\right\|$ is uniformly bounded above and below. 
\begin{proposition}
	For every $m\in \mathbb{N}$, there exist $P_1, P_2>0$, with $P_1$ depending on $m$, such that
	\begin{align*}
		P_2<\left\| d\phi^m_\theta\right\| <P_1,
	\end{align*}
	for every $\theta\in SM$.
\end{proposition}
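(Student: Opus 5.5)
The upper bound will come from the uniform estimate in the proof of Theorem 1.1, and the lower bound from the fact that $\phi^m$ preserves the flow direction. Throughout, set $A:=\sqrt{C_1(c)^2+C_2(c)^2}$, where $C_1(c),C_2(c)$ are the constants of Lemma 3.1.

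\textbf{Upper bound.} The estimate \eqref{cotasup} gives $\|d\phi^1_\theta\|\le A$ for every $\theta\in SM$. We write $\phi^m=\phi^1\circ\cdots\circ\phi^1$ and apply the chain rule, $d\phi^m_\theta=d\phi^1_{\phi^{m-1}(\theta)}\circ\cdots\circ d\phi^1_\theta$. Submultiplicativity of the operator norm then yields $\|d\phi^m_\theta\|\le A^m$ uniformly in $\theta$. We take $P_1:=A^m+1$, which gives the strict inequality and depends only on $m$ and $c$.

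\textbf{Lower bound.} We use the flow direction. The generator $G(\theta)\in T_\theta SM$ satisfies $d\phi^m_\theta(G(\theta))=G(\phi^m(\theta))$. In the horizontal/vertical coordinates, $G(\theta)=(v,0)$, since it corresponds to the Jacobi field $J(t)=\gamma'_\theta(t)$ with $J'\equiv 0$. Its Sasaki norm is therefore $\|v\|=1$ at every point of $SM$. Hence
\[
\|d\phi^m_\theta\|\ge \|d\phi^m_\theta(G(\theta))\|=\|G(\phi^m(\theta))\|=1
\]
for every $\theta\in SM$. We take $P_2:=1/2$, which is independent of $m$, as the statement requires.

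Alternatively, one could bound $\|d\phi^m_\theta\|$ below by $\|d\phi^{-m}_{\phi^m\theta}\|^{-1}\ge A^{-m}$ using Remark 3.2. That bound depends on $m$, however, so the flow-direction argument is preferable.

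The only step needing care is the identification $G(\theta)=(v,0)$, with Sasaki norm exactly $1$. This is immediate from the definition of $j_\theta$, since $d\pi(G)=v$ and $K(G)=\nabla_{\gamma'}\gamma'=0$. The argument has no real obstacle beyond this check.
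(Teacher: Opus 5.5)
Your argument is correct and is essentially the paper's: the lower bound $\|d\phi^m_\theta\|\ge 1$ comes from the flow-invariance of the unit-norm geodesic vector field, which gives $P_2=1/2$, and the upper bound comes from iterating \eqref{cotasup} via the chain rule. The only cosmetic difference is that the paper takes $P_1=h(c)^m$ with $h(c)=\sqrt{C_1(c)^2+C_2(c)^2}+1$, whereas you take $P_1=A^m+1$; both choices work.
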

\begin{proof} Since the geodesic vector field is invariant under the flow,
	\begin{align*}
		\left\| d\phi^m_\theta\right\|\ge 1.
	\end{align*}
	Therefore, we may take $P_2=\dfrac{1}{2}$. On the other hand, by \eqref{cotasup},
	\begin{align}\label{betanov}
		\left\|d\phi^1_\theta\right\| < \sqrt{C_1(c)^2 + C_2(c)^2}+1:=h(c)
	\end{align}
	for every $\theta\in SM$. Then, we can set $P_1=h(c)^m$, which completes the proof.
\end{proof}
A direct consequence of Proposition 4.3 is the following result.
\begin{corollary}
	There exists $\tau\in (0,1)$, depending on $m$, such that
	\begin{align*}
		\tau \left\| d\phi^m_{\tilde{\theta}}\right\|< \left\| d\phi^m_\theta\right\|,
	\end{align*}
	for every $\theta, \tilde{\theta}\in SM$.
\end{corollary}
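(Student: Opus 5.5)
The corollary should follow directly from the two-sided bound in Proposition 4.3, which holds uniformly over all of $SM$. Fix $m\in\mathbb{N}$ and let $P_1=h(c)^m$ and $P_2=\tfrac12$ be the constants produced there. For any $\theta,\tilde\theta\in SM$ we then have
\begin{align*}
\left\| d\phi^m_{\tilde\theta}\right\| < P_1 \quad\text{and}\quad \left\| d\phi^m_{\theta}\right\| > P_2 .
\end{align*}
Note that the second inequality uses the bound $\|d\phi^m_\theta\|\ge 1$, which comes from the invariance of the geodesic vector field. The plan is to take $\tau$ to be the ratio $P_2/P_1$.

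Concretely, I will set $\tau:=\dfrac{P_2}{P_1}=\dfrac{1}{2h(c)^m}$. Since $h(c)>1$ by \eqref{betanov} (its definition contains the summand $+1$), and $m\ge 1$, we get $\tau\in(0,1)$. Moreover $\tau$ depends only on $m$, together with the fixed curvature constant $c$. With this choice,
\begin{align*}
\tau\left\| d\phi^m_{\tilde\theta}\right\| < \tau P_1 = P_2 < \left\| d\phi^m_{\theta}\right\|,
\end{align*}
which is exactly the claimed inequality, valid for every pair $\theta,\tilde\theta\in SM$.

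I do not expect any genuine obstacle here. The one point to check is that both bounds in Proposition 4.3 are strict and uniform over all of $SM$, and not merely over the compact set $K$. This is the case: the upper bound comes from the global estimate \eqref{cotasup} of Theorem 1.1, and the lower bound from the fact that $d\phi^m$ preserves the norm of the flow direction $G$.
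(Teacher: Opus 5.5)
Your proposal is correct and is essentially the paper's proof: both set $\tau = P_2/P_1 = \frac{1}{2h(c)^m}$ using the uniform two-sided bound of Proposition 4.3 over all of $SM$, and the corollary follows from the chain $\tau\|d\phi^m_{\tilde\theta}\| < P_2 < \|d\phi^m_\theta\|$. Your explicit check that $h(c)>1$ forces $\tau\in(0,1)$ is a detail the paper leaves implicit.
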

\begin{proof}
	By Proposition 4.3 we have that
	\begin{align*}
		\dfrac{P_2}{P_1}<\dfrac{\left\| d\phi^m_\theta\right\|}{\left\| d\phi^m_{\tilde{\theta}}\right\|} <\dfrac{P_1}{P_2}
	\end{align*}
	Considering $\tau=\dfrac{P_2}{P_1}=\dfrac{1}{2h(c)^m}$ the conclusion of the corollary follows. 
\end{proof} 
Fix $m\in \mathbb{N}$ sufficiently large. The estimates established above allow us to obtain the following theorem, which is the analogue of Theorem 5.1 in \cite{cantoralpesin}. Indeed, once Propositions 4.2 and 4.3 and Corollary 4.4 are available, the proof follows exactly the same arguments as in \cite{cantoralpesin}. For the convenience of the reader, we state the result here and omit the proof.
\begin{theorem}
	Let $M$ be a complete Riemannian manifold without conjugate points and sectional curvature bounded below by $-c^2$, for some $c>0$. Then, there exists $\varrho:=\varrho(K)\in (0,1)$ such that
	\begin{align*}
		\phi^m(exp_\theta(B(0,\tau\kappa^{-1}\varrho)))\subseteq exp_{\phi^m(\theta)} (d \phi^m_\theta(B(0,\varrho)),
	\end{align*}
	for every $\theta\in K$.
\end{theorem}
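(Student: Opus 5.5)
We need to show that $\phi^m\circ\exp_\theta$ on a small ball $B(0,\tau\kappa^{-1}\varrho)\subset T_\theta SM$ lands inside $\exp_{\phi^m(\theta)}$ of the linear image $d\phi^m_\theta(B(0,\varrho))$. The plan is to compare the nonlinear map $F_\theta:=\exp_{\phi^m(\theta)}^{-1}\circ\phi^m\circ\exp_\theta$ with its linearization $d\phi^m_\theta$ on a neighbourhood of $0$ whose size is uniform in $\theta\in K$.

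\textbf{Step 1: uniform charts.} Work in $SM$ with the Sasaki metric. The curvature tensor and its derivative are bounded, so the curvature of the Sasaki metric on $SM$ is bounded. Proposition 2.4 applied to $N=SM$ then gives a radius $t_0$ such that $\|d(\exp_\theta)_{v}\|\le 5/2$ for $\|v\|\le t_0$, uniformly in $\theta$. I would also use a uniform lower bound on the injectivity radius along the relevant compact set. It suffices to have this on the compact set $K$ and its image $\phi^m(K)$, which is again compact, so a positive lower bound $r_K$ exists there. Inside these radii, $\exp_\theta$ and $\exp_{\phi^m(\theta)}^{-1}$ are diffeomorphisms with derivatives bounded above and below uniformly. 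Hence $F_\theta$ is well defined on a ball of uniform radius, with $F_\theta(0)=0$ and $dF_\theta(0)=d\phi^m_\theta$.

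\textbf{Step 2: uniform norm bounds.} By Proposition 4.3, $\|d\phi^m\|<P_1$ globally. Combining this with Step 1 gives $\|dF_\theta(v)\|\le (5/2)^2P_1=:L$ for all $\|v\|$ below a uniform radius $r_1$. Since $K$ is compact and $\theta\mapsto F_\theta$ depends continuously on $\theta$ in the $C^1$ topology, $dF_\theta$ is uniformly equicontinuous near $0$ for $\theta\in K$. Therefore there is $\varrho\in(0,\min(r_1,1))$ such that $\|dF_\theta(v)-d\phi^m_\theta\|\le \tfrac12\|d\phi^m_\theta\|^*$ for $\|v\|\le\varrho$ and $\theta\in K$. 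Here I use that the conorm is bounded below on $K$, which follows from the lower bound in \eqref{comparacion}.

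\textbf{Step 3: inclusion.} Fix $v$ with $\|v\|<\tau\kappa^{-1}\varrho$. The mean value inequality gives
\[
\|F_\theta(v)\|\le L\|v\|\le \|d\phi^m_\theta\|\,\tau^{-1}\|v\|<\kappa^{-1}\|d\phi^m_\theta\|\varrho .
\]
This uses Corollary 4.4 to pass from sup-type bounds to $\|d\phi^m_\theta\|$ up to the factor $\tau^{-1}$, after absorbing the constant $(5/2)^2$ into $\varrho$. Proposition 4.2 then turns $\kappa^{-1}\|d\phi^m_\theta\|\varrho$ into a quantity at most $\|d\phi^m_\theta\|^*\varrho$. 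Since $d\phi^m_\theta(B(0,\varrho))\supseteq B(0,\|d\phi^m_\theta\|^*\varrho)$, we get $F_\theta(v)\in d\phi^m_\theta(B(0,\varrho))$. Applying $\exp_{\phi^m(\theta)}$ yields the claim.

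The main obstacle is Step 1, namely uniform control of the exponential map and its inverse on the noncompact $SM$. Proposition 2.4 handles the derivative of $\exp$, but invertibility also needs an injectivity radius bound. I would first try to get this from compactness of $K\cup\phi^m(K)$, so that only the a priori global derivative bounds (Proposition 4.3) are needed away from $K$.
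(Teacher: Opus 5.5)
Your architecture is the one the statement's constants are built for. The paper omits this proof and defers to \cite{cantoralpesin}, with Propositions 4.2, 4.3 and Corollary 4.4 as the inputs. The route is: pass to exponential charts over the compact set $K$, apply the mean value inequality to $F_\theta$, convert a global derivative bound into $\|d\phi^m_\theta\|$ at the cost of $\tau$, and then into the conorm at the cost of $\kappa$.

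\textbf{The gap.} Step 3 does not close as written. The inclusion is homogeneous in $\varrho$: the source radius is $\tau\kappa^{-1}\varrho$, the target is $d\phi^m_\theta(B(0,\varrho))$, and their ratio $\tau\kappa^{-1}$ is fixed before $\varrho$ is chosen. Shrinking $\varrho$ therefore cannot absorb a multiplicative constant in the Lipschitz bound for $F_\theta$. Concretely, your chain needs $L\le\tau^{-1}\|d\phi^m_\theta\|$. With $L=(5/2)^2P_1$, $P_1=h(c)^m$ and $\tau^{-1}=2h(c)^m$, this reads $\|d\phi^m_\theta\|\ge 25/8$. Nothing in your argument guarantees that; you only know $\|d\phi^m_\theta\|\ge 1$. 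Separately, Proposition 2.4 bounds $d\exp$ from above only. It does not supply the factor $5/2$ for $d(\exp^{-1}_{\phi^m(\theta)})$ that you use in Step 2.

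\textbf{First repair.} The fix is already in your Step 2, whose key estimate Step 3 never uses. For $\|v\|<\tau\kappa^{-1}\varrho\le\varrho$, apply the mean value inequality to $F_\theta-d\phi^m_\theta$ on the segment $[0,v]\subset T_\theta SM$:
\[
\|F_\theta(v)-d\phi^m_\theta v\|\le \sup_{w\in[0,v]}\|dF_\theta(w)-d\phi^m_\theta\|\,\|v\|\le \tfrac12\|d\phi^m_\theta\|^*\|v\| .
\]
Hence $F_\theta(v)=d\phi^m_\theta(v+u)$ with $u=(d\phi^m_\theta)^{-1}\bigl(F_\theta(v)-d\phi^m_\theta v\bigr)$. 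This gives $\|u\|\le\|(d\phi^m_\theta)^{-1}\|\cdot\tfrac12\|d\phi^m_\theta\|^*\|v\|=\tfrac12\|v\|$. So $F_\theta(v)\in d\phi^m_\theta(B(0,\tfrac32\|v\|))\subseteq d\phi^m_\theta(B(0,\varrho))$, because $\tau=1/(2h(c)^m)<\tfrac12$ and $\kappa>1$.

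\textbf{Second repair.} If you prefer to keep the $\tau$--$\kappa$ chain, the constant you can legitimately absorb by shrinking $\varrho$ is the distortion of $\exp$ and $\exp^{-1}$, not the fixed $5/2$. That distortion tends to $1$ uniformly over the compact set $K\cup\phi^m(K)$. Once both are at most $\sqrt2$, you get $\|dF_\theta\|\le 2h(c)^m=\tau^{-1}\le\tau^{-1}\|d\phi^m_\theta\|$. This uses the factor-$2$ slack hidden in $P_2=\tfrac12$, since actually $\|d\phi^m\|\ge 1$. The rest of your chain (Proposition 4.2, then the ellipsoid containment) then goes through.

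In either repair, only compactness of $K$ and smoothness of $\phi^m$ enter. Proposition 2.4 and the bounded curvature of the Sasaki metric are not needed, which is consistent with $\varrho$ depending only on $K$.
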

Denote by $\varrho_m=\tau \kappa^{-1}\varrho<1$, where the constants $\tau, \kappa$ and $\varrho$ come from Corollary 4.4, Proposition 4.2 and Theorem 4.5, respectively. Using techniques from separate sets, as applied in \cite{cantoralpesin}, we define a finite partition $\mathcal{P}=\mathcal{P}_{K}\cup \left\lbrace SM\setminus K\right\rbrace $ of $SM$ as follows:
\begin{itemize}
	\item[.] $\mathcal{P}_{K}$ is a partition of $K$ such that for every $X\in \mathcal{P}_{K}$, there exist balls $B(x,r')$ and $B(x,r)$ such that the constants satisfy $0<r'<r<2r'\le   \dfrac{\varrho_m}{2}$ and $$B(x,r')\subset X\subset B(x,r).$$
	\item[.] There exists a constant $\zeta>0$ such that the cardinal of $\mathcal{P}_{K}$, denoted by $\left| \mathcal{P}_{K}\right|$, satisfies $$\left| \mathcal{P}_{K}\right|\le \zeta\cdot(\varrho_m)^{-\dim(SM)}.$$
	\item[.] $h_{\mu}(\phi^m,\mathcal{P})\ge h_{\mu}(\phi^m)-\varepsilon$. 
\end{itemize}
By definition of entropy,
\begin{align}\label{ruelle}
	h_{\mu}(\phi^m,\mathcal{P})&=\lim_{k\to +\infty} H_{\mu}\left( \left. \mathcal{P}\right|\phi^m\mathcal{P} \vee\ldots \vee \phi^{km}\mathcal{P} \right) \nonumber \\
	&\le H_{\mu}\left( \left. \mathcal{P}\right| \phi^m\mathcal{P}\right) \nonumber \\
	&\le \sum_{D\in \phi^m\mathcal{P}}\mu(D)\cdot\log\text{card}\left\lbrace X\in \mathcal{P}: X\cap D\neq\emptyset \right\rbrace.
\end{align}
Denote by $\varphi=\sup_{\theta\in SM}\left\|d\phi_\theta \right\|>1$. First, we estimate the number of elements $X\in \mathcal{P}$ that intersect a given element $D\in \phi^m\mathcal{P}$.
\begin{lemma}
	There exists a constant $L_1>0$ such that if $D\in \phi^m\mathcal{P}$ then
	\begin{align*}
		\emph{card} \left\lbrace X\in \mathcal{P}: X\cap D\neq\emptyset \right\rbrace \le L_1\cdot\max\left\lbrace \varphi^{m\cdot\dim (SM)}, (\varrho_m)^{-\dim(SM)} \right\rbrace .
	\end{align*}
\end{lemma}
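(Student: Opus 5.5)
We prove the bound by a volume-counting argument. The idea is to treat separately the elements $D = \phi^m(Y)$ with $Y\in\mathcal{P}_K$ and the single exceptional element $\phi^m(SM\setminus K)$. Throughout we use that $\varphi=\sup_{\theta}\|d\phi_\theta\|$ is finite, which follows from \eqref{cotasup}. The mean value inequality along Sasaki geodesics, together with the completeness of $SM$, then shows that $\phi^m$ is globally Lipschitz with constant $\varphi^m$.

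\textbf{Case $D=\phi^m(Y)$ with $Y\in\mathcal{P}_K$.} Here $Y\subset B(y,r)$ with $r\le\varrho_m/4$, so $D\subset B(\phi^m(y),\varphi^m r)$. There are two kinds of sets $X\in\mathcal{P}$ meeting $D$.
\begin{itemize}
\item At most one of them is $SM\setminus K$.
\item The others lie in $\mathcal{P}_K$. Each such $X$ satisfies $B(x,r'_X)\subset X\subset B(x,r_X)$ with $r_X<2r'_X$. The inner balls $B(x,r'_X)$ are pairwise disjoint, and all of them lie in $B(\phi^m(y),\varphi^m r+\varrho_m)$.
\end{itemize}
To compare volumes we need uniform bounds on the Riemannian volume of small Sasaki balls in $SM$, namely
\[
a\,s^{\dim SM}\le \operatorname{vol}B(z,s)\le b\,s^{\dim SM}\quad\text{for all } z\in SM,\ s\le s_0 .
\]
These should follow from Proposition 2.4 applied to $N=SM$: since the curvature tensor of $M$ and its derivative are bounded, the curvature of the Sasaki metric is bounded, and so the exponential map of $SM$ is uniformly bi-Lipschitz on balls of radius $t_0$. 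Larger balls are covered by at most a comparable number of such small balls, which gives the upper bound $\operatorname{vol}B(z,R)\lesssim R^{\dim SM}$ up to a factor of $\varrho_m^{-\dim SM}$ type. The count then follows by comparing volumes, but the radii $r'_X$ are not uniform, so we cannot simply divide. We use instead that each $X\in\mathcal{P}_K$ has inner radius bounded below by a fixed fraction of $\varrho_m$; this is part of the separated-set construction in \cite{cantoralpesin}, which is where the bound $|\mathcal{P}_K|\le\zeta\varrho_m^{-\dim SM}$ comes from. This gives
\[
\text{card}\le 1+ L\Big(\frac{\varphi^m\varrho_m+\varrho_m}{\varrho_m}\Big)^{\dim SM}\le L_1\varphi^{m\dim SM}.
\]

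\textbf{Case $D=\phi^m(SM\setminus K)$.} Here we cannot localize $D$, so we use the trivial bound $\text{card}\le|\mathcal{P}|\le \zeta\varrho_m^{-\dim SM}+1\le L_1\varrho_m^{-\dim SM}$.

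Combining the two cases and taking the maximum gives the lemma.

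The main obstacle is the uniform volume comparison for balls in the noncompact manifold $SM$. It requires a uniform lower bound on the injectivity radius, or at least a locally uniform bi-Lipschitz chart, from bounded geometry. I would first try to derive this from Proposition 2.4 applied to $N=SM$ with the Sasaki metric, whose curvature is controlled by $R$ and $\nabla R$.
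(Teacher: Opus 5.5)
Your argument is essentially the paper's proof: the same split into $D=\phi^m(X')$ with $X'\in\mathcal{P}_K$, where the disjoint inner balls of the elements meeting $D$ are packed into a neighborhood of $D$ of diameter $O(\varphi^m r')$ to get a count $\lesssim\varphi^{m\dim(SM)}$, and $D=\phi^m(SM\setminus K)$, handled by the trivial bound $|\mathcal{P}_K|+1\le(\zeta+1)\varrho_m^{-\dim(SM)}$. The paper treats $r'$ as a common constant and enlarges $D$ only by $4r'$, so it never needs $r'\gtrsim\varrho_m$; and your ``main obstacle'' is milder than you fear, for two reasons: $\varphi<h(c)$ and $\varrho_m<\varrho/(2h(c)^m)$ give $\varphi^m\varrho_m<\varrho/2$, so only small scales occur (the large-ball covering step is unnecessary, and an extra $\varrho_m^{-\dim(SM)}$ factor there would actually break the bound), and the lower volume bound is only needed for the inner balls, which lie in the compact set $K$, so compactness supplies it, whereas Proposition 2.4 alone gives only the upper bound.
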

\begin{proof}
	Consider $D\in \phi^m\mathcal{P}$, then $D=\phi^m(X')$ for some $X'\in \mathcal{P}$.\\
	Case I: $X'\in \mathcal{P}_{K}$.\\
	By the mean value inequality
	\begin{align*}
		\text{diam}(D)&=\text{diam}(\phi^m(X'))\\
		&\le \sup_{\theta\in SM} \left\|d\phi_\theta \right\|^{m}\cdot\text{diam}(X')\\
		&\le \varphi^m\cdot 4r',
	\end{align*}
	since $X'\subset B(x,2r')$. If $X\in \mathcal{P}_{K}$ satisfies $X\cap D\neq \emptyset$, then $X$ is contained in a $4r'$-neighborhood of $D$, denoted by $W$. Since $\varphi^m>1$ we have that
	\begin{align*}
		\text{diam}(W)& \le \varphi^m\cdot 4r' + 8r'\\
		&=4r'\cdot \left( \varphi^m + 2\right)\\
		&<12r'\cdot\varphi^m.
	\end{align*} 
	Hence
	\begin{align}\label{vol1}
		\sum_{\left\lbrace X\in \mathcal{P}_{K}:X\cap D\neq \emptyset\right\rbrace }\text{vol}(X)\le \text{vol}(W)\le A_1 \cdot (r')^{\dim (SM)} \cdot\varphi^{m\cdot\dim (SM)},
	\end{align}
	where $A_1>0$. Since $X\in \mathcal{P}_{K}$ contains a ball of radius $r'$, the volume of $X$ is bounded below by
	\begin{align}\label{vol2}
		A_2\cdot (r')^{\dim (SM)}\le\text{vol}(X),
	\end{align}
	where $A_2>0$. From \eqref{vol1} and \eqref{vol2} we have that
	\begin{align*}
		\text{card}\left\lbrace X\in \mathcal{P}:X\cap D\neq \emptyset\right\rbrace &\le \dfrac{A_1}{A_2} \cdot\varphi^{m\cdot\dim (SM)} +1\\
		&\le \left(\dfrac{A_1}{A_2}+1\right) \cdot\varphi^{m\cdot\dim (SM)} .
	\end{align*}
	\noindent
	Case II: $X'=SM\setminus K$.\\
	In this case, we have that 
	\begin{align*}
		\text{card}\left\lbrace X\in \mathcal{P}:X\cap D\neq \emptyset\right\rbrace&\le \left| \mathcal{P}_{K}\right| +1\\
		&\le (\zeta+1)(\varrho_m)^{-\dim(SM)}.
	\end{align*}
	Considering $L_1=\max\left\lbrace \dfrac{A_1}{A_2}+1, \zeta+1\right\rbrace$ we obtain the desired result.
\end{proof}
Let $\Lambda_m\subset \Lambda$ be the set of regular points $\theta\in SM$ satisfying
\begin{align*}
	e^{k\left( \lambda(\theta,\xi)-\varepsilon\right) } \left\| \xi\right\| \le \left\| d\phi^k_\theta (\xi) \right\|\le e^{k\left( \lambda(\theta,\xi)+\varepsilon\right) } \left\| \xi\right\|,
\end{align*}
for every $k\ge m$ and every $\xi\in T_\theta SM$, where $$	\lambda(\theta,\xi)=\displaystyle \lim_{n\rightarrow + \infty} \dfrac{1}{n}\log \left\|d \phi^n_\theta(\xi) \right\|.$$ 
The set $\Lambda_m$ consists of regular points for which the growth of the derivative is controlled, up to an error $\varepsilon$, for all times $k\ge m$. We next derive a sharper exponential estimate for the number of elements
$D\in \phi^m\mathcal{P}_K$ that intersect the set $\Lambda_m$.
\begin{lemma}
	If $D\in \phi^m\mathcal{P}_K$ intersects $\Lambda_m$, then there is a constant $L_2>0$ such that
	\begin{align*}
		\emph{card}\left\lbrace X\in \mathcal{P}: X\cap D\neq \emptyset\right\rbrace\le L_2\cdot e^{m\varepsilon}\prod_{i:\lambda_i>0}e^{m(\lambda_i+\varepsilon)k_i}. 
	\end{align*}
\end{lemma}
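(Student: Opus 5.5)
We follow the standard Ruelle/Mañé volume-counting argument, as in \cite{cantoralpesin}, using Theorem 4.5 to linearize $\phi^m$ on the small elements of $\mathcal{P}_K$.

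\textbf{Step 1: linearize.} Write $D=\phi^m(X')$ with $X'\in\mathcal{P}_K$, and pick $\theta'\in X'$ with $\phi^m(\theta')\in\Lambda_m$. Since $X'\subset B(x,r)$ with $2r\le\varrho_m/2$, we have $X'\subset \exp_{\theta'}(B(0,\varrho_m))$, where $\varrho_m=\tau\kappa^{-1}\varrho$. Because $X'\subset K$, Theorem 4.5 applies at $\theta'$ and gives
\begin{align*}
D\subseteq \exp_{\phi^m(\theta')}\big(d\phi^m_{\theta'}(B(0,\varrho))\big).
\end{align*}
The set $d\phi^m_{\theta'}(B(0,\varrho))$ is an ellipsoid. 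We need to control its semi-axes. The point $\theta'$ itself need not lie in $\Lambda_m$, so we transfer the estimate through $\phi^m(\theta')$. Invariance of $\Lambda$ together with the symmetry of Proposition 4.1 (applied via $\Psi$ and $\phi^{-m}$) shows that the singular values of $d\phi^m_{\theta'}$, which equal the reciprocals of those of $d\phi^{-m}_{\phi^m(\theta')}$, are at most $e^{m(\lambda_i+\varepsilon)}$ in the directions of $H_i$. The bounds from the definition of $\Lambda_m$ apply here with time $k=m$. Consequently the ellipsoid has $k_i$ axes of length at most $\varrho e^{m(\lambda_i+\varepsilon)}$ for each $i$. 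For $\lambda_i\le0$ these lengths are at most $\varrho e^{m\varepsilon}$.

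\textbf{Step 2: neighborhood volume.} By Proposition 2.2, $\exp$ is $5/2$-Lipschitz on these small scales, since $\varrho<1$ can be taken below $t_0$. Hence $D$ lies in the image of a box with the sides above. Any $X\in\mathcal{P}$ meeting $D$ has diameter at most $2r<\varrho_m\le\varrho$, so it lies in the $\varrho$-neighborhood $W$ of that image. The volume of $W$ is at most a constant times
\begin{align*}
\prod_i\big(\varrho e^{m(\lambda_i+\varepsilon)}+2\varrho\big)^{k_i}\le C\varrho^{\dim SM}e^{m\varepsilon}\prod_{\lambda_i>0}e^{m(\lambda_i+\varepsilon)k_i}.
\end{align*}
Here the factors with $\lambda_i\le 0$ are bounded by $3\varrho$. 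The single $e^{m\varepsilon}$ absorbs the $\varepsilon$-losses coming from the nonpositive (central) directions, after renaming $\varepsilon$.

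\textbf{Step 3: count.} Each $X\in\mathcal{P}_K$ contains a ball of radius $r'$, so its volume is comparable to $(r')^{\dim SM}$. The elements of $\mathcal{P}_K$ are disjoint, so dividing the volume of $W$ by this lower bound, and adding $1$ for $SM\setminus K$, gives the stated count. For this to produce a constant $L_2$ independent of $m$, the ratio $(\varrho/r')^{\dim SM}$ must be bounded.

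\textbf{Main obstacle.} I expect this scale mismatch to be the main difficulty. The cleanest fix is to apply Theorem 4.5 with a radius comparable to $r$ in place of $\varrho$; by linearity this is legitimate when it is rescaled from $\varrho_m$, and then $r'/r\ge1/2$ yields a uniform constant. The second delicate point is the transfer of the $\Lambda_m$ estimates from $\phi^m(\theta')$ back to $\theta'$ in Step 1. If that transfer is problematic, I would instead choose the partition so that the $\Lambda_m$ point can be taken in $X'$, and use the upper bound from $\Lambda_m$ directly at $\theta'$.
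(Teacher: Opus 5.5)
Your diagnosis of the scale mismatch is correct: indeed $\varrho/r'\ge 4\kappa/\tau$, which is exponential in $m$. However, the repair you propose does not work, and this is a genuine gap. Even granting that Theorem 4.5 holds at every smaller radius, it reads $\phi^m(\exp_\theta(B(0,s)))\subseteq \exp_{\phi^m(\theta)}(d\phi^m_\theta(B(0,\kappa\tau^{-1}s)))$. The ratio $\kappa/\tau=2h(c)^m e^{m(\lambda_{\max}-\lambda_{\min}+2\varepsilon)}$ between the two radii does not depend on $s$, so no rescaling removes it.

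If the ellipsoid radius is comparable to $r$, the domain ball has radius $\tau\kappa^{-1}r\ll r$ and does not contain $X'$. If instead the domain radius is comparable to $r$, so that $X'$ is covered, the ellipsoid lives at scale $\kappa\tau^{-1}r$. On $K$ every semi-axis $\kappa\tau^{-1}r\,\sigma_j(d\phi^m_\theta)$ then exceeds $r$, because $\sigma_j\ge\|d\phi^m_\theta\|^*>e^{m(\lambda_{\min}-\varepsilon)}$. Dividing the neighbourhood volume by $(r')^{\dim(SM)}$ therefore gives a bound of order $(\kappa/\tau)^{\dim(SM)}\prod_j\sigma_j$. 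This exceeds the target $\prod_{\lambda_i>0}e^{m(\lambda_i+\varepsilon)k_i}$ by a factor growing at least like $h(c)^{m\dim(SM)}$, so $\frac1m\log L_2\not\to 0$ and the proof of Theorem 1.2 breaks.

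The paper's proof does not settle this point either. It bounds the count by $b\cdot\mathrm{vol}(B_1)\cdot\varrho^{-\dim(SM)}$, although the elements of $\mathcal{P}_K$ only contain balls of radius $r'\le\varrho_m/4$.

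The missing ingredient is a scale-invariant linearization with small \emph{relative} error, as in Ma\~n\'e's proof of Ruelle's inequality. For every $\delta>0$ there should be $s_0>0$ (depending on $m$) such that $\phi^m(\exp_\theta(B(0,s)))\subseteq\exp_{\phi^m(\theta)}(d\phi^m_\theta(B(0,s))+B(0,\delta s))$ for all $\theta\in K$ and $0<s\le s_0$. This follows from uniform continuity of $d\phi^m$ in exponential charts over the compact set $K$. With partition elements at scale $s$ and $r'\ge r/2$, the count is then at most a constant independent of $m$ and $s$ times $\prod_j\max\{\sigma_j(d\phi^m_\theta),1\}$. Relatedly, the bound $5/2$ on $d\exp$ comes from Proposition 2.4, not 2.2. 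It requires the whole ellipsoid to lie within radius $t_0$, i.e.\ a condition like $\varrho e^{m(\lambda_{\max}+\varepsilon)}\le t_0$, not merely $\varrho<t_0$.

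The transfer in your Step 1 also fails. $\Lambda_m$ is defined by forward estimates at the point only. An upper bound for the singular values of $d\phi^m_{\theta'}=(d\phi^{-m}_{\phi^m(\theta')})^{-1}$ needs lower bounds for backward iterates at $\phi^m(\theta')$, that is, forward estimates at $\Psi(\phi^m(\theta'))$. Proposition 4.1 transports regularity and the Oseledets splitting, but not the uniformity for all $k\ge m$, and nothing places $\Psi(\phi^m(\theta'))$ in $\Lambda_m$. The $\Lambda_m$ bounds at $\phi^m(\theta')$ with $k=m$ concern $d\phi^m_{\phi^m(\theta')}$, which is a different map.

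You do not need to re-choose the partition. Instead, classify $D=\phi^m(X')$ according to whether $X'$ meets $\Lambda_m$. This is what the paper's proof actually does (it takes $\theta\in X'\cap\Lambda_m$), and the final summation is unaffected because $\mu(\phi^m(X'))=\mu(X')$.

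At such a $\theta$, the axis bounds should come from the min--max principle, not from the Oseledets directions, since those are not the principal axes of the ellipsoid. Let $\lambda_{(j)}$ be the $j$-th largest exponent counted with multiplicity. The subspace $\bigoplus_{\lambda_i\le\lambda_{(j)}}H_i(\theta)$ has dimension at least $\dim(SM)-j+1$, and every vector $\xi$ in it satisfies $\|d\phi^m_\theta\xi\|\le e^{m(\lambda_{(j)}+\varepsilon)}\|\xi\|$. Hence $\sigma_j(d\phi^m_\theta)\le e^{m(\lambda_{(j)}+\varepsilon)}$.
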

\begin{proof}
	Let $X'\in \mathcal{P}_{K}$ be such that $\phi^m(X')=D$ and assume that $X'\cap \Lambda_m\neq \emptyset$. Choose a point $\theta\in X'\cap \Lambda_m$, and consider the ball $B=B(0,\varrho)\subset T_\theta SM$. We first claim that $$X'\subseteq exp_\theta(B(0,\varrho_m)),$$
	where $exp_\theta$ denotes the exponential map defined on $T_\theta SM$. Indeed, let $z\in X'$. Since $SM$ is complete with respect to the Sasaki metric (see \cite[Lemma 2.1]{cantoralpesin}) we can choose $w\in T_\theta SM$ such that $\gamma(t)=exp_\theta(tw)$, where $\gamma$ is a geodesic with $\gamma(0)=\theta$ and $\gamma(1)=exp_\theta(w)=z$. As diam $\mathcal{P}_{K} < \varrho_m$ then $$d(\theta,z)=l(\gamma)< \varrho_m.$$
	By Gauss's Lemma (see \cite{lee}) we obtain
	\begin{align*}
		\varrho_m&> \int_0^1\left\| \gamma'(s)\right\| ds\\
		&= \int_0^1 \left\langle d(exp_\theta)_{sw}(w),d(exp_\theta)_{sw}(w) \right\rangle^{1/2}ds \\
		&=\int_0^1 \dfrac{1}{\sqrt{s}}\left\langle d(exp_\theta)_{sw}(sw),d(exp_\theta)_{sw}(w) \right\rangle^{1/2} ds\\
		&= \int_0^1\dfrac{1}{\sqrt{s}}\left\langle sw,w\right\rangle^{1/2} ds\\
		&=\left\| w\right\| .
	\end{align*}
	Therefore, $w\in B(0,\varrho_m)$ and consequently
	$$z=exp_{\theta}(w)\in exp_\theta(B(0,\varrho_m)).$$
	Since $z\in X'$ was arbitrary, the claim follows. Therefore, by Theorem 4.5 we obtain
	\begin{align*}
		D=\phi^m(X')\subseteq B_0:=exp_{\phi^m(\theta)}(\tilde{B}_0),
	\end{align*}
	where $\tilde{B}_0=d\phi^m_\theta(B)$ is an ellipsoid. Since the curvature tensor of $M$ and its derivative are both uniformly bounded, the sectional curvature of $SM$ with respect to the Sasaki metric is also uniformly bounded (see \cite[Appendix A]{cantoralpesin}). Consequently, the curvature tensor of $SM$ is uniformly bounded as well. Applying Proposition 2.4 to the manifold $SM$, there exists $t_0>0$ such that  
	\begin{align}\label{novot0}
		\left\|d (exp_{\phi^m(\theta)})_{tv} \right\|\le \dfrac{5}{2}  ,  
	\end{align} 
	for every $\left| t\right|\le t_0$ and every $v\in T_{\phi^m(\theta)}SM$ with $\left\|v \right\|=1$. Then, for $m$ large enough, we have that
	\begin{align*}
		\text{diam} (D)&\le h(c)^m\cdot \text{diam} (X')\\
		&< h(c)^m\cdot \varrho_m\\
		&= h(c)^m\cdot\dfrac{1}{2h(c)^m}\cdot e^{-m(\lambda_{\max}-\lambda_{\min}+2\varepsilon)}\cdot \varrho\\
		&=\dfrac{1}{2}\cdot e^{-m(\lambda_{\max}-\lambda_{\min}+2\varepsilon)}\cdot \varrho\\
		&<\dfrac{t_0}{2}, 
	\end{align*}
	where $h(c)$ is the constant defined in \eqref{betanov}. Therefore, we can choose $B_0$ such that $D\subset B_0$ and diam$(B_0)<t_0$. Since diam$\mathcal{P}_{K}< \varrho_m<\varrho$, every element $X\in \mathcal{P}_{K}$ intersecting $D$ is contained in the set
	$$B_1=\left\lbrace \psi\in SM: d(\psi,B_0)<\varrho\right\rbrace.$$ 
	Since $X\subset B(x,r)$ and $2r<\varrho_m<\varrho$, then $B(x,\varrho/2)\subset B_1$. Hence, there exists a constant $b>0$ such that 
	\begin{align}\label{cardi}
		\text{card}\left\lbrace X\in \mathcal{P}_{K}: X\cap D\neq \emptyset \right\rbrace \le b\cdot\text{vol}(B_1)\cdot \varrho^{-\dim(SM)},
	\end{align}
	where vol$(B_1)$ denotes the volume of $B_1$ induced by the Sasaki metric. Consider a subset $\tilde{B}^*_0\subset \tilde{B}_0$ such that $exp_{\phi^m(\theta)}$ is a diffeomorphism between $\tilde{B}^*_0$ and $B_0$. Since
	\begin{align*}
		\left| \det d (exp_{\phi^m(\theta)})_v\right| \le \left\|d (exp_{\phi^m(\theta)})_v \right\|^{\dim(SM)}
	\end{align*}
	for every $v\in \tilde{B}^*_0$, by \eqref{novot0} we have that
	\begin{align*}
		\text{vol}(B_0)\le \left( \dfrac{5}{2}\right) ^{\dim(SM)}\cdot \text{vol}(\tilde{B}_0).
	\end{align*}
	The previous estimate shows that the volume of $B_1$ is bounded, up to a bounded factor, by the product of the lengths of the axes of the ellipsoid $\tilde{B}_0=d\phi^m_\theta(B)$. These axes are determined by the expansion rates of $d\phi^m_\theta$ along the Oseledets subspaces. For directions corresponding to non-positive Lyapunov exponents, the lengths of the axes grow at most sub-exponentially. On the other hand, for each positive Lyapunov exponent $\lambda_i$, the corresponding axes have length bounded by $e^{m(\lambda_i+\varepsilon)}$, up to a bounded factor, for all sufficiently large $m$. Therefore, 
	\begin{align*}
		\text{vol}(B_1)&\le A\cdot e^{m\varepsilon}\cdot(\text{diam}(B))^{\dim (SM)}\prod_{i:\lambda_i>0}e^{m(\lambda_i+\varepsilon)k_i}\\
		&\le A\cdot e^{m\varepsilon}\cdot(2\varrho)^{\dim (SM)}\prod_{i:\lambda_i>0}e^{m(\lambda_i+\varepsilon)k_i}\\
		& =\tilde{A}\cdot e^{m\varepsilon}\cdot \varrho^{\dim (SM)}\prod_{i:\lambda_i>0}e^{m(\lambda_i+\varepsilon)k_i},
	\end{align*}
	where $\tilde{A}=A\cdot 2^{\dim (SM)}$, for some $A>0$. Substituting this estimate into \eqref{cardi} we obtain
	\begin{align*}
		\text{card}\left\lbrace X\in \mathcal{P}: X\cap D\neq \emptyset \right\rbrace &\le b\cdot\text{vol}(B_1)\cdot \varrho^{-\dim (SM)}+1\\
		&\le b\cdot\tilde{A}\cdot e^{m\varepsilon}\prod_{i:\mathcal{X}_i>0}e^{m(\mathcal{X}_i+\varepsilon)k_i}+1\\
		&\le (b\cdot\tilde{A}+1)\cdot e^{m\varepsilon}\prod_{i:\mathcal{X}_i>0}e^{m(\mathcal{X}_i+\varepsilon)k_i}.
	\end{align*}
	Taking $L_2=b\cdot\tilde{A}+1$, the desired estimate follows.
\end{proof}
\textbf{Proof of Theorem 1.2.} From the construction of the compact set $K$, we have that $\mu(SM\setminus K)<\varepsilon$. For a fixed sufficiently large $m$, we divide the elements of the partition $\phi^m\mathcal{P}_K$
into two sets:
\begin{align*}
	\mathcal{O}_1&=\left\lbrace D\in \phi^m \mathcal{P}_K: D\cap \Lambda_m=\emptyset\right\rbrace,\\
	\mathcal{O}_2&=\left\lbrace D\in \phi^m \mathcal{P}_K: D\cap \Lambda_m \neq \emptyset \right\rbrace.
\end{align*}
By \eqref{ruelle}, Lemmas 4.6 and 4.7 we obtain
\begin{align}\label{fin}	
	mh_{\mu}(\phi)-\varepsilon&=h_{\mu}(\phi^m)-\varepsilon  \nonumber \\
	&\le h_{\mu}(\phi^m,\mathcal{P}) \nonumber \\
	&\le \sum_{D\in \phi^m\mathcal{P}}\mu(D)\cdot\log\text{card}\left\lbrace X\in \mathcal{P}: X\cap D\neq\emptyset \right\rbrace \nonumber \\
	&\le \sum_{D\in \mathcal{O}_1}\mu(D)\cdot\log\text{card}\left\lbrace X\in \mathcal{P}: X\cap D\neq\emptyset \right\rbrace \nonumber\\
	&\hspace{0.5cm} +	\sum_{D\in \mathcal{O}_2}\mu(D)\cdot\log\text{card}\left\lbrace X\in \mathcal{P}: X\cap D\neq\emptyset \right\rbrace \nonumber \\
	&\hspace{0.5cm}+\mu(\phi^m(SM\setminus K))\cdot\log\text{card}\left\lbrace X\in \mathcal{P}: X\cap \phi^m(SM\setminus K)\neq\emptyset \right\rbrace \nonumber\\ 
	&\le \sum_{D\in \mathcal{O}_1}\mu(D)\left( \log (L_1) +\dim (SM)\cdot\max\left\lbrace m\log(\varphi),-\log(\varrho_m) \right\rbrace\right) \nonumber \\
	&\hspace{0.5cm}+ \sum_{D\in \mathcal{O}_2}\mu(D)\left( \log(L_2) +m\varepsilon  +m\sum_{i:\lambda_i>0}(\lambda_i+\varepsilon)k_i\right)  \nonumber \\
	& \hspace{0.5cm} + \mu(SM\setminus K)\cdot\left( \log (L_1) +\dim (SM)\cdot\max\left\lbrace m\log(\varphi),-\log(\varrho_m) \right\rbrace\right)  \nonumber\\
	&\le \left( \log (L_1) +\dim (SM)\cdot\max\left\lbrace m\log\left(\varphi\right) ,-\log(\varrho_m) \right\rbrace\right)\cdot \mu(SM\setminus \Lambda_m) \nonumber\\
	&\hspace{0.5cm} + \log(L_2) +m\varepsilon  +m\sum_{i:\lambda_i>0}(\lambda_i+\varepsilon)k_i \nonumber\\
	&\hspace{0.5cm}+ \varepsilon\cdot\left( \log (L_1) +\dim (SM)\cdot\max\left\lbrace m\log\left(\varphi\right) ,-\log(\varrho_m) \right\rbrace\right).
\end{align}
By Oseledets theorem we have that $\mu(SM\setminus \Lambda_m)\rightarrow 0$ as $m\rightarrow \infty$. Moreover, by the definition of $\varrho_m$, we have
\begin{align*}
	\lim_{m\rightarrow +\infty}\dfrac{1}{m}\log(\varrho_m) = -\log(h(c))-(\lambda_{\max}-\lambda_{\min}+2\varepsilon),
\end{align*}
where $h(c)$ is the constant defined in \eqref{betanov}. Then, dividing by $m$ in \eqref{fin} and taking $m\rightarrow +\infty$ we obtain
\begin{align*}
	h_{\mu}(\phi)\le \varepsilon + \sum_{i:\lambda_i>0}(\lambda_i+\varepsilon)k_i +\varepsilon\cdot\dim (SM)\cdot\max\left\lbrace \log\left(\varphi\right) ,\log(h(c))+\lambda_{\max}-\lambda_{\min}+2\varepsilon \right\rbrace.
\end{align*}
Since all the terms multiplied by $\varepsilon$ vanish in the limit, letting $\varepsilon\rightarrow 0$ we have
\begin{align*}
	h_{\mu}(\phi)\le  \sum_{i:\lambda_i>0}\lambda_i k_i,
\end{align*}
which is the desired upper bound. $\hfill\square$

\section{The Pesin formula}
In this section, we explain how the proof of the Pesin entropy formula for Anosov geodesic flows given in \cite{cantoralpesin} extends to the setting of manifolds without conjugate points. Rather than reproducing the proof, we verify that each of the technical ingredients used in the Anosov setting remains valid under our geometric assumptions. Once these ingredients are established, the argument of \cite{cantoralpesin} carries over with only minor modifications.\\

Let $M$ be a complete finite-volume Riemannian manifold without conjugate points and $\mu$ an $\phi^t$-invariant probability measure on $SM$ satisfying the assumptions of Theorem 1.3. By Theorem 1.1, there exists a full $\mu$-measure set $\Lambda\subset SM$ such that the Lyapunov exponents of the geodesic flow exist at every point $\theta\in \Lambda$. Following the notation introduced in the previous sections, we write
$$E^{cs}(\theta)=E^c(\theta) \oplus E^s(\theta), \hspace{1cm} \forall\hspace{0.1cm}\theta\in \Lambda.$$
Unlike the Anosov setting, where the unstable bundle has constant dimension equal to $n-1$, the dimension of the unstable Oseledets subspace $E^u(\theta)$  may vary from point to point. This is the main additional feature that must be taken into account in the proof. To overcome this difficulty, we decompose the measure according to the dimension of the unstable subspace.\\

For each $j\ge 0$, let $$\Sigma_j=\left\lbrace \theta:\dim E^u(\theta)=j\right\rbrace,$$ and define $$S=\left\lbrace j\ge 0: \mu(\Sigma_j)>0\right\rbrace.$$ Since the map $\theta\rightarrow \dim E^u(\theta)$ is $\phi$-invariant, each set $\Sigma_j$ is $\phi$-invariant. Consequently, for every $j\in S$, the probability measure
$$\mu_j(A):=\dfrac{\mu(A\cap \Sigma_j)}{\mu(\Sigma_j)}$$ is $\phi$-invariant. Moreover,
\begin{align*}
	\mu=\sum_{j\in S}\mu(\Sigma_j)\mu_j,
\end{align*}
and, by the affinity of entropy,
\begin{align*}
	h_\mu(\phi)=\sum_{j\in S} \mu(\Sigma_j)h_{\mu_j}(\phi). 
\end{align*}
It therefore suffices to prove that $$h_{\mu_j}(\phi)\ge \int_{SM} \mathcal{X}^+d\mu_j,$$ where $$\mathcal{X}^+(\theta)=\sum_{\lambda_i(\theta)>0}\lambda_i(\theta)\cdot\dim (H_i(\theta)).$$ This inequality is trivial when $j=0$, so we may assume that $j>0$. To simplify the notation, we write $\mu=\mu_j$.\\

Fix any $\varepsilon>0$. By Egorov's theorem, there exists a compact set $K\subset \Lambda$ with $\mu(K)\ge 1-\varepsilon$ such that the splitting $$T_\theta SM=E^{cs}(\theta)\oplus E^u(\theta)$$ depends continuously on $\theta\in K$. Moreover, there exist $N>0$ and constants $\alpha>\beta>1$ such that, if $g=\phi^N$, the inequalities
\begin{align*}
	\left\|d g^n_\theta(\eta) \right\|&\ge \alpha^n\left\| \eta\right\| \nonumber \\ 
	\left\|\left.d g^n_\theta\right|_{E^{cs}(\theta)}  \right\|&\le \beta^n \nonumber\\ 
	\log\left| \det\left( \left. d g^n_\theta\right|_{E^u(\theta)} \right) \right| &\ge Nn\left(\mathcal{X}^+(\theta)-\varepsilon\right)
\end{align*}
hold for all $\theta\in K$, $n\ge 0$ and $\eta\in E^u(\theta)$.\\

These are precisely the estimates required in the proof of \cite{cantoralpesin}. Since that proof relies only on the above estimates, the curvature assumptions, and Proposition 2.4, the same argument applies with no essential modifications in the present setting. Consequently, the Pesin entropy formula holds for geodesic flows on complete finite-volume manifolds without conjugate points.

	\bibliographystyle{abbrv}
	\bibliography{references}

@article{cantoralpesin,
  author = {Alexander Cantoral and Sergio Roma{\~n}a},
  title = {Ruelle's inequality and {P}esin's formula for {A}nosov geodesic flows in non-compact manifolds},
  journal = {Discrete and Continuous Dynamical Systems},
  volume = {46},
  number = {1},
  pages = {387--412},
  year = {2026},
  doi = {10.3934/dcds.2025104},
  mrclass = {37D40 (53C20)},
}

@book {paternain,
	AUTHOR = {Paternain, Gabriel P.},
	TITLE = {Geodesic flows},
	SERIES = {Progress in Mathematics},
	VOLUME = {180},
	PUBLISHER = {Birkh\"{a}user Boston, Inc., Boston, MA},
	YEAR = {1999},
	PAGES = {xiv+149},
	ISBN = {0-8176-4144-0},
	MRCLASS = {53D25 (37D40 37J99)},
	MRNUMBER = {1712465},
	MRREVIEWER = {Boris Hasselblatt},
	DOI = {10.1007/978-1-4612-1600-1},
	URL = {https://doi.org/10.1007/978-1-4612-1600-1},
}

@article {klin,
	AUTHOR = {Klingenberg, Wilhelm},
	TITLE = {Riemannian manifolds with geodesic flow of {A}nosov type},
	JOURNAL = {Ann. of Math. (2)},
	FJOURNAL = {Annals of Mathematics. Second Series},
	VOLUME = {99},
	YEAR = {1974},
	PAGES = {1--13},
	ISSN = {0003-486X},
	MRCLASS = {58E10 (58F15)},
	MRNUMBER = {377980},
	MRREVIEWER = {Robert Roussarie},
	DOI = {10.2307/1971011},
	URL = {https://doi.org/10.2307/1971011},
}

@article{FK,
  author    = {Harry Furstenberg and Harry Kesten},
  title     = {Products of Random Matrices},
  journal   = {The Annals of Mathematical Statistics},
  volume    = {31},
  number    = {2},
  pages     = {457--469},
  year      = {1960},
  month     = jun,
  doi       = {10.1214/aoms/1177705909}
}

@article {mane2,
	AUTHOR = {Ma\~{n}\'{e}, R.},
	TITLE = {On a theorem of {K}lingenberg},
	JOURNAL = {Dynamical systems and bifurcation theory ({R}io de {J}aneiro,
	1985)},
	SERIES = {Pitman Res. Notes Math. Ser.},
	VOLUME = {160},
	PAGES = {319--345},
	PUBLISHER = {Longman Sci. Tech., Harlow},
	YEAR = {1987},
	MRCLASS = {58F17 (53C22 58F15)},
	MRNUMBER = {907897},
	MRREVIEWER = {Victor Bangert},
}

@incollection{Losert,
  author    = {Viktor Losert and Klaus Schmidt},
  title     = {A Class of Probability Measures on Groups Arising from Some Problems in Ergodic Theory},
  booktitle = {Probability Measures on Groups},
  editor    = {Herbert Heyer},
  series    = {Lecture Notes in Mathematics},
  volume    = {706},
  pages     = {220--238},
  publisher = {Springer},
  address   = {Berlin, Heidelberg},
  year      = {1979},
  doi       = {10.1007/BFb0063108_20}
}

@article{nocon,
  title={{R}iemannian manifolds with {A}nosov geodesic flow do not have conjugate points},
  author={Melo, {\'I}talo and Roma{\~n}a, Sergio},
  journal={arXiv preprint arXiv:2008.12898},
  year={2020}
}

@incollection {Knieper,
	AUTHOR = {Knieper, Gerhard},
	TITLE = {Hyperbolic dynamics and {R}iemannian geometry},
	BOOKTITLE = {Handbook of dynamical systems, {V}ol. 1{A}},
	PAGES = {453--545},
	PUBLISHER = {North-Holland, Amsterdam},
	YEAR = {2002},
	MRCLASS = {37D25 (37A35 53C24)},
	MRNUMBER = {1928523},
	MRREVIEWER = {Rafael Oswaldo Ruggiero},
	DOI = {10.1016/S1874-575X(02)80008-X},
	URL = {https://doi.org/10.1016/S1874-575X(02)80008-X},
}

@article {oseledec,
	AUTHOR = {Oseledec, V. I.},
	TITLE = {A multiplicative ergodic theorem. {C}haracteristic {L}japunov,
	exponents of dynamical systems},
	JOURNAL = {Trudy Moskov. Mat. Ob\v{s}\v{c}.},
	FJOURNAL = {Trudy Moskovskogo Matemati\v{c}eskogo Ob\v{s}\v{c}estva},
	VOLUME = {19},
	YEAR = {1968},
	PAGES = {179--210},
	ISSN = {0134-8663},
	MRCLASS = {28.70 (34.00)},
	MRNUMBER = {0240280},
	MRREVIEWER = {J\'{o}zsef Sz\"{u}cs},
}

@article {riquelme,
	AUTHOR = {Riquelme, Felipe},
	TITLE = {Ruelle's inequality in negative curvature},
	JOURNAL = {Discrete Contin. Dyn. Syst.},
	FJOURNAL = {Discrete and Continuous Dynamical Systems. Series A},
	VOLUME = {38},
	YEAR = {2018},
	NUMBER = {6},
	PAGES = {2809--2825},
	ISSN = {1078-0947,1553-5231},
	MRCLASS = {37D25 (28D20 37A35 37D10 37D40)},
	MRNUMBER = {3809061},
	MRREVIEWER = {Alejandro\ Mario\ Mes\'{o}n},
	DOI = {10.3934/dcds.2018119},
	URL = {https://doi.org/10.3934/dcds.2018119},
}

@article{ruelle,
	title={An inequality for the entropy of differentiable maps},
	author={Ruelle, David},
	journal={Boletim da Sociedade Brasileira de Matem{\'a}tica-Bulletin/Brazilian Mathematical Society},
	volume={9},
	number={1},
	pages={83--87},
	year={1978},
	publisher={Springer-Verlag Berlin/Heidelberg}
}

@book{lee,
	title={Introduction to Riemannian manifolds},
	author={Lee, John M},
	volume={2},
	year={2018},
	publisher={Springer}
}
\end{document}